\newtheorem{theorem}{Theorem}[section]
\newtheorem{lemma}[theorem]{Lemma}
\newtheorem{proposition}[theorem]{Proposition}
\theoremstyle{plain}
\newtheorem{remark}[theorem]{Remark}
\newtheorem{thmm}{Theorem}
\begin{document}

\title[Extension of fragmented Baire-one functions]{Extension of fragmented Baire-one functions \\ on Lindel\"{o}f spaces}
\author{Olena Karlova}
\author{Volodymyr Mykhaylyuk}

\small{{\it Chernivtsi National University, Ukraine} \\ {\it Jan Kochanowski University in Kielce, Poland}}

\begin{abstract}
  We investigate the possibility of extension of fragmented functions from Lindel\"{o}f subspaces of
completely regular spaces and find necessary and sufficient conditions on a fragmented Baire-one function to be extendable on any completely regular superspace.
\end{abstract}

\maketitle

\section{Introduction}

It is well known that a Baire-one function ($=$ a pointwise limit of a sequence of continuous functions) on a $G_\delta$-subset of a metric space can be extended to a Baire-one function defined on the whole space (see \cite[\S 35, VI]{Ku2}). In 2005 O.~Kalenda and J.~Spurn\'{y} \cite{Kal_Spu} obtained the following result.

\begin{thmm}\label{th:KS}
  Let $E$ be a Lindel\"{o}f hereditarily Baire subset of a completely regular space $X$
and $f:E\to\mathbb R$ be a Baire-one function. Then there exists a Baire-one function
$g:X\to\mathbb R$ such that $f = g$ on $E$.
\end{thmm}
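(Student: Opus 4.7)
The plan is to reduce the extension of the general Baire-one function $f$ to the extension of ``simple'' (countably-valued) Baire-one functions, and to assemble the result via a uniform-approximation argument.

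First, I would use the hereditary Baire property of $E$ to approximate $f$ uniformly by countably-valued Baire-one functions. Splitting $\mathbb R$ into disjoint half-open intervals $I_{n,k}$ of length $2^{-n}$ and choosing $c_{n,k}\in I_{n,k}$, one sets $f_n=\sum_k c_{n,k}\chi_{A_{n,k}}$ with $A_{n,k}=f^{-1}(I_{n,k})$; then $\sup_E|f-f_n|\le 2^{-n}$ and $\{A_{n,k}\}_k$ is a disjoint cover of $E$. Hereditary Baireness is the crucial ingredient: on a hereditarily Baire space every Baire-one function restricted to any nonempty closed subset has a point of continuity, and an iteration of this fact upgrades the level sets $A_{n,k}$ to ambiguous class-$2$ sets of $E$ (both $F_\sigma$ and $G_\delta$).

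The next step is to extend each atom $A_{n,k}$ to an ambiguous class-$2$ set $B_{n,k}$ of $X$ so that $B_{n,k}\cap E=A_{n,k}$, and so that $\{B_{n,k}\}_k$ partitions $X$ for each $n$. Here Lindel\"ofness of $E$ combines with complete regularity of $X$: any closed-in-$E$ set $F$ is Lindel\"of in $X$, and separating $F$ pointwise from points of its complement by cozero sets of $X$ and extracting countable subcovers realises $F$ as the trace on $E$ of a zero-$G_\delta$ set of $X$; dually, $F_\sigma$-in-$E$ sets extend to cozero $F_\sigma$ sets of $X$. Combining both extensions of $A_{n,k}$ and then disjointifying over $k$ in a way that is refinement-compatible as $n$ increases produces the $B_{n,k}$.

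Finally, define $g_n=\sum_k c_{n,k}\chi_{B_{n,k}}$ on $X$. Each $g_n$ is Baire-one on $X$ because it is a countably-valued function with ambiguous class-$2$ level sets, and $(g_n)$ is uniformly Cauchy on $X$ by construction, so its uniform limit $g$ is Baire-one, satisfies $g|_E=f$, and furnishes the extension. The main obstacle is the second step: simultaneously ensuring that the $B_{n,k}$ are of the correct descriptive class in $X$, that $\{B_{n,k}\}_k$ is a partition of $X$ for each fixed $n$, and that the partitions for successive $n$ refine coherently enough to transfer the uniform Cauchy property from $E$ to $X$. The approximation step on $E$ and the final passage to the uniform limit are, by comparison, routine.
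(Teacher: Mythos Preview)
Note first that Theorem~A is not proved in this paper; it is quoted from Kalenda--Spurn\'{y}~\cite{Kal_Spu}. The paper does, however, recover it from its own machinery: on a hereditarily Baire space every Baire-one real function is fragmented (stated in the introduction), and then Theorem~\ref{thm:ext:main} together with Proposition~\ref{prop:ext_countable_frag_ness_R} give the extension to any completely regular superspace. That route---hereditarily Baire $\Rightarrow$ fragmented $\Rightarrow$ functionally countably fragmented $\Rightarrow$ extendable---is the paper's argument, and it is organised around the notion of fragmentation rather than around level-set partitions.

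Your plan is structurally closer to the original \cite{Kal_Spu} argument than to this paper's route. The obstacle you single out---extending the disjoint level sets $A_{n,k}$ to a disjoint family $B_{n,k}$ of ambiguous sets in $X$---is exactly the step that cannot be done by naive ``extend each set, then disjointify'', and it is where the real work lies. In the present paper the tool is Proposition~\ref{prop:extGdeltaInBetaX}/\ref{prop:SeparationTh}: one passes to $\beta X$, exploits Lindel\"ofness of the relevant $F_\sigma$-sets there, and runs a transfinite ``good point'' argument to produce separating functionally $G_\delta$-sets; only then does Sierpi\'nski-type reduction apply. Your sketch supplies no mechanism of this kind, so as written the second step is a genuine gap rather than a routine verification.

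Two further imprecisions are worth flagging. First, the role of hereditary Baireness is not to make the individual $A_{n,k}=f^{-1}(I_{n,k})$ ambiguous (for a half-open interval $I$ the preimage is a priori only the intersection of a $G_\delta$ with an $F_\sigma$, and in a non-metrizable $E$ even ``$F_\sigma$'' needs to be read functionally); rather, hereditary Baireness makes $f$ fragmented, which is precisely the hypothesis $\overline{F\cap A}\ne F$ or $\overline{F\cap B}\ne F$ that drives Proposition~\ref{prop:SeparationTh}. Second, forcing the partitions $\{B_{n,k}\}_k$ to refine coherently across $n$ so that $(g_n)$ is uniformly Cauchy on all of $X$ is an additional burden you have not discharged; the paper sidesteps it entirely via Lemma~\ref{lem:unif_conv_ext_R_space}, which uses the $R$-space structure of $\mathbb R$ to manufacture a uniformly convergent sequence on $X$ from one that converges uniformly only on $E$.
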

Let us observe that Theorem~\ref{th:KS} gives new results even in case of separable
metric spaces. For example, if $B\subseteq\mathbb R$ is the Bernstein set, then it is a hereditarily Baire Lindel\"{o}f
space which is Borel non-measurable. Nevertheless any Baire-one function on $B$
 can be extended to a Baire-one function on $\mathbb R$ by Theorem~\ref{th:KS}.

It is easy to see that the  assumption that $E$ is hereditarily Baire cannot be omitted in Theorem~\ref{th:KS}. Indeed,  if $A$ and $B$ are disjoint dense subsets of $E=\mathbb Q\cap [0,1]$ such that $E=A\cup B$ and $X=[0,1]$ or $X=\beta E$, then the characteristic function $f=\chi_A:E\to [0,1]$, being a Baire-one function, can not be extended to a Baire-one function on $X$. The reason is that $\chi_A$ is not a fragmented function.

Recall~\cite{JR:1982} that for some $\varepsilon>0$ a function $f:X\to (Y,d)$ from a topological space to a metric one is said to be {\it $\varepsilon$-fragmented}, if for every  closed nonempty set  $F\subseteq X$ there exists a nonempty relatively open set $U\subseteq F$ such that ${\rm diam}f(U)<\varepsilon$. If $f$ is $\varepsilon$-fragmented for every  $\varepsilon>0$, then it is called  {\it fragmented}.

If $X$ is hereditarily Baire, then every Baire-one map $f:X\to (Y,d)$ is barely continuous (i.e., for every nonempty closed set $F\subseteq X$ the restriction $f|_F$ has a point of continuity) and, hence, is fragmented (see \cite[31.X]{Ku2}). If $X$ is a  paracompact space in which every closed set is $G_\delta$, then every fragmented map $f:X\to (Y,d)$ is Baire-one in the case either ${\rm dim} X=0$, or $Y$ is a  contractible locally path-connected space~\cite{Karlova:Mykhaylyuk:Comp,Karlova:Mykhaylyuk:2016:EJMA}.

Therefore, it is natural to ask if any fragmented Baire-one function $f:E\to\mathbb R$ defined on a Lindel\"{o}f subspace $E$ of a completely regular space $X$ can be extended to a Baire-one function on the whole space $X$? The following theorem is our main result (see Theorem~\ref{thm:equiv:main}).
\begin{thmm}
   Let $X$ be a Lindel\"{o}ff space and $f:X\to\mathbb R$ be a Baire-one function. Then following conditions are equivalent: 1) $f$ is extendable to a Baire-one function on any completely regular superspace $Y\supseteq X$; 2) $f$ is extendable to a Baire-one function on any compactification $Y$ of $X$; 3) $f$ is extendable to a Baire-one function on $\beta X$; 4) $f$ is functionally countably fragmented; 5) $f$ is fragmented.
\end{thmm}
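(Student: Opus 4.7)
The plan is to prove the cycle
\[
1)\Rightarrow 2)\Rightarrow 3)\Rightarrow 5)\Rightarrow 4)\Rightarrow 1).
\]
The implications $1)\Rightarrow 2)\Rightarrow 3)$ are immediate, since $\beta X$ is a compactification and every compactification is a completely regular superspace. For $3)\Rightarrow 5)$, I would use that $\beta X$ is compact Hausdorff, hence hereditarily Baire, so a Baire-one extension $\tilde f\colon\beta X\to\mathbb R$ is barely continuous and thus fragmented by the classical fact recalled in the introduction; since fragmentation passes to subspaces, $f=\tilde f|_X$ is fragmented.

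The implication $5)\Rightarrow 4)$ is where the Lindel\"of hypothesis enters. I would fix $\varepsilon>0$ and run the transfinite fragmentation derivative: set $F_0=X$, and at each step choose a nonempty relatively cozero $U_\alpha\subseteq F_\alpha$ with $\mathrm{diam}\,f(U_\alpha)<\varepsilon$, shrinking any relatively open witness to cozero using complete regularity of $X$. Put $F_{\alpha+1}=F_\alpha\setminus U_\alpha$ and intersect at limits. Lifting each $U_\alpha$ to a cozero set $V_\alpha$ of $X$ gives a cozero cover of $X$ at termination; by Lindel\"ofness this cover reduces to a countable subcover, so the recursion stops at a countable ordinal. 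Disjointifying and letting $\varepsilon$ range over $\{2^{-n}\}$ produces the countable functional decomposition required by $4)$.

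The substantive step, and the main obstacle, is $4)\Rightarrow 1)$. Given any completely regular $Y\supseteq X$, the functional countable fragmentation furnishes, for each $n$, a functionally simple approximation $f_n\colon X\to\mathbb R$ with $\|f-f_n\|_\infty<2^{-n}$ whose constant pieces sit on zero-sets $Z_{n,k}=h_{n,k}^{-1}(0)$ for continuous $h_{n,k}\colon X\to[0,1]$. My plan is to build the extension as a uniformly convergent telescoping sum $g=g_1+\sum_{n}(g_{n+1}-g_n)$, where each $g_n\colon Y\to\mathbb R$ is a Baire-one simple map extending $f_n$ with $\|g_{n+1}-g_n\|_\infty\le C\cdot 2^{-n}$. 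The construction of $g_n$ forces one to transfer the zero-set decomposition $\{Z_{n,k}\}$ from $X$ into a compatible functional decomposition in $Y$; the natural route goes through $\beta Y$, using that each $h_{n,k}$ extends uniquely to $\beta X$ and that $\overline{X}^{\beta Y}$ is a compactification of $X$ receiving a canonical continuous surjection from $\beta X$. The hard part will be verifying both that the transferred level sets retain their functional structure as viewed from $Y$ and that the resulting simple maps are genuinely Baire-one on $Y$; this is precisely where the \emph{countability} of the partition in $4)$ is exploited, and the reason $4)$ (rather than mere fragmentation) is needed for extension to arbitrary completely regular superspaces.
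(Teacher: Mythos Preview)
Your cycle and the arguments for $1)\Rightarrow 2)\Rightarrow 3)\Rightarrow 5)$ are fine, and your plan for $4)\Rightarrow 1)$ is essentially what the paper does in Proposition~\ref{prop:ext_countable_frag_ness_R}: a Lindel\"of subspace is $z$-embedded, so each countable $\tfrac1n$-associated cozero sequence lifts to the superspace, giving finite-valued Baire-one approximants converging uniformly to $f$ on $X$, after which a uniform-limit device (your telescoping sum, or the paper's R-space retraction in Lemma~\ref{lem:unif_conv_ext_R_space}) yields the Baire-one extension.

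The genuine gap is $5)\Rightarrow 4)$. Your argument never uses the Baire-one hypothesis, so if it worked it would show that \emph{every} fragmented real function on a Lindel\"of space is functionally countably fragmented. That is false: the compact ordinal space $\omega_1+1$ is scattered, so every function on it is fragmented, yet $\chi_{\{\omega_1\}}$ is not even $F_\sigma$-measurable and hence not functionally countably fragmented. Concretely, the step ``by Lindel\"ofness this cover reduces to a countable subcover, so the recursion stops at a countable ordinal'' fails on two counts: only $U_\alpha=V_\alpha\cap F_\alpha$, not the cozero lift $V_\alpha$, has small $f$-diameter, so a countable subcover $\{V_{\alpha_n}\}$ gives no control on ${\rm diam}\,f\bigl(V_{\alpha_n}\setminus\bigcup_{m<n}V_{\alpha_m}\bigr)$; and Lindel\"ofness does not forbid strictly decreasing $\omega_1$-chains of closed sets (again witness $\omega_1+1$). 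In the paper this implication is the technical core (Theorem~\ref{thm:ext:main}) and uses Baire-one essentially: for a finite $\varepsilon$-net $\{y_k\}$ in the range one first produces functionally ambiguous sets $A_k,B_k$ separating $f^{-1}(B[y_k,\varepsilon])$ from $f^{-1}(Y\setminus B(y_k,4\varepsilon))$, then fragmentedness forces $\overline{F\cap A_k}\ne F$ or $\overline{F\cap B_k}\ne F$ for every closed $F$, and a separation theorem proved by working inside $\beta X$ (Proposition~\ref{prop:SeparationTh}) yields functionally \emph{countably} resolvable sets $C_k$ with $A_k\subseteq C_k\subseteq X\setminus B_k$. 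The resulting finite-valued maps are functionally countably fragmented and uniformly close to $f$, and one concludes by closure under uniform limits. The detour through $\beta X$ and the separation argument are exactly what your direct transfinite recursion is missing.
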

In the last section we give an example of a completely metrizable locally compact space $X$ and a Baire one function $f:X\to[0,1]$ such that $f$ is not countably fragmented, in particular, $f$ can not be extended to a Baire one function $g:\beta X\to[0,1]$. This gives a negative answer to a question of Kalenda and Spurn\'{y} \cite[Question 1]{Kal_Spu} (see also \cite[Theorem 7]{Karlova:Mykhaylyuk:2017:ExtensionB1}).

\section{Properties of fragmented maps and resolvable sets}

A subset $A$ of a topological space $X$ is an {\it $H$-set} or {\it resolvable in the sense of Hausdorff}, if there exists a  decreasing sequence $(F_\xi)_{\xi\in[0,\alpha)}$ of closed subsets of $X$ such that
\begin{gather*}
   A=\bigcup_{\xi<\alpha,\,\xi {\footnotesize \mbox{\,\,is odd}}}(F_\xi\setminus F_{\xi+1}).
\end{gather*}
If each set $F_\xi$ can be chosen to be functionally closed, we say that $A$ is {\it functionally $H$-set} or {\it functionally resolvable}. Moreover, if $|\alpha|\le\aleph_0$, then $A$ is called {\it functionally countably  $H$-set} or {\it functionally countably resolvable}.

It is well-known that $A$ is an $H$-set if and only if for any nonempty closed set $F\subseteq X$ there exists a relatively open set $U\subseteq F$ such that either $U\subseteq A$ or $U\subseteq X\setminus A$ (see \cite[\S 12]{Ku2}).

Let $\mathscr U=(U_\xi:\xi\in[0,\alpha])$ be  a transfinite sequence of subsets of a topological space $X$. Following~\cite{HolSpu}, we define $\mathscr U$ to be {\it regular in $X$}, if
\begin{enumerate}[label=(\alph*)]
  \item each $U_\xi$ is open in $X$;

  \item $\emptyset=U_0\subset U_1\subset U_2\subset\dots\subset U_\alpha=X$;

  \item\label{it:c} $U_\gamma=\bigcup_{\xi<\gamma} U_\xi$ for every limit ordinal $\gamma\in[0,\alpha)$.
\end{enumerate}

It was proved in \cite{Karlova:Mykhaylyuk:2017:ExtensionB1} that   a map $f:X\to Y$ is $\varepsilon$-fragmented if and only if  there exists a regular sequence $\mathscr U=(U_\xi:\xi\in[0,\alpha])$ (which is called {\it $\varepsilon$-associated with $f$} and is denoted by $\mathscr U_\varepsilon(f)$) in $X$ such that ${\rm diam} f(U_{\xi+1}\setminus U_\xi)<\varepsilon$ for all $\xi\in[0,\alpha)$.

We say that an $\varepsilon$-fragmented  map $f:X\to Y$ is {\it functionally $\varepsilon$-fragmented} if $\mathscr U_\varepsilon(f)$ can be chosen such that every set $U_\xi$ is functionally open in $X$. Further, $f$ is {\it functionally $\varepsilon$-countably fragmented} if $\mathscr U_\varepsilon(f)$ can be chosen to be countable and $f$ is {\it functionally countably fragmented} if $f$ is functionally $\varepsilon$-countably fragmented for all $\varepsilon>0$.

A set $A$ is called {\it functionally $G_\delta$-set (functionally $F_\sigma$-set)}, if $A$ is an intersection (a union) of a sequence of functionally open (functionally closed) sets. Further, a set $A$ is said to be {\it functionally ambiguous}, if it is functionally $G_\delta$ and functionally $F_\sigma$ simultaneously.

\begin{proposition}\label{prop:properties} Let $X$ be a topological space and $(Y,d)$ be a metric space.
\begin{enumerate}
\item Every functionally countably resolvable set $A\subseteq X$ is functionally ambiguous.

\item The class of all functionally countably resolvable subsets of $X$ is closed under finite unions and intersections.

\item A finite-valued function $f:X\to Y$ is functionally countably fragmented if and only if the set $f^{-1}(y)$ is functionally countably resolvable for every $y\in f(X)$.

\item The class of all functionally countably fragmented functions between $X$ and $Y$ is closed under uniform limits.\label{prop:properties:it:4}

\item   Every functionally countably fragmented function $f:X\to Y$ is functionally $F_\sigma$-measurable.\label{prop:properties:it:5}
\end{enumerate}
\end{proposition}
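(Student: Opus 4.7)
The plan is to prove items (1)--(5) in order, using the characterisation recalled in the excerpt: a map is functionally $\varepsilon$-countably fragmented iff it admits a countable regular sequence $\mathscr U_\varepsilon(f)$ of functionally open sets with small-diameter successor differences. For item (1), the key observation is that each piece $F_\xi\setminus F_{\xi+1}$ of the resolvable representation of $A$ equals $F_\xi\cap(X\setminus F_{\xi+1})$, the intersection of a functionally closed set with a functionally open one; since every functionally open set is a countable union of functionally closed sets, each such piece is functionally $F_\sigma$, and so is the countable union $A$. Applying the same reasoning to $X\setminus A$ (which, after prepending $F_{-1}:=X$, has exactly the same alternating form with the roles of odd and even indices swapped) yields $X\setminus A$ functionally $F_\sigma$; thus $A$ is functionally ambiguous. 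A useful by-product is that the class of functionally countably resolvable sets is closed under complement.

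For (2), closure under complement reduces the claim via De Morgan to closure under binary intersection. Given two witnessing regular sequences $(U_\xi)_{\xi\le\alpha}$ and $(V_\eta)_{\eta\le\beta}$ of functionally open sets with $\alpha,\beta$ countable ordinals, I would set
\[
W_{\beta\cdot\xi+\eta}\;:=\;U_\xi\cup(U_{\xi+1}\cap V_\eta)\qquad(\xi<\alpha,\ \eta\le\beta),
\]
cap $W_{\beta\cdot\alpha}:=X$, and verify that $(W_\gamma)$ is an increasing transfinite sequence of functionally open sets of countable length whose successor differences equal $(U_{\xi+1}\setminus U_\xi)\cap(V_{\eta+1}\setminus V_\eta)$ and whose value at every limit ordinal is the union of its predecessors. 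Applied to $\chi_A$ and $\chi_B$ via the equivalence for two-valued maps in (3), this yields the desired closure.

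For (3), the forward direction is immediate: choose $\varepsilon<\tfrac12\min_{i\ne j}d(y_i,y_j)$ and a countable $\mathscr U_\varepsilon(f)=(U_\xi)$; each $U_{\xi+1}\setminus U_\xi$ then sits inside a single $f^{-1}(y_i)$, and writing $F_\xi:=X\setminus U_\xi$ realises $f^{-1}(y_i)$ as a union of pieces $F_\xi\setminus F_{\xi+1}$. Merging maximal runs of consecutive same-label pieces and reindexing produces the alternating Hausdorff form with a countable, decreasing sequence of functionally closed sets. For the converse, each two-valued $\chi_{f^{-1}(y_i)}$ is functionally countably fragmented --- invert a resolvable sequence $(F_\xi^i)$ to the regular sequence $U_\xi^i:=X\setminus F_\xi^i$ of functionally open sets, on whose successor differences $\chi_{f^{-1}(y_i)}$ is constant --- and iterating the joint-refinement construction from (2) finitely many times yields one countable regular sequence on each of whose differences every $\chi_{f^{-1}(y_i)}$, and hence $f$ itself, is constant.

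For (4), if $f_n\to f$ uniformly with each $f_n$ functionally countably fragmented, pick $n$ with $\sup_x d(f_n(x),f(x))<\varepsilon/3$; any $(\varepsilon/3)$-associated countable regular sequence for $f_n$ is then $\varepsilon$-associated for $f$ by the triangle inequality. For (5), fix $\varepsilon>0$, take a countable $\mathscr U_\varepsilon(f)=(U_\xi)$, and let $g_\varepsilon$ be constant on each $U_{\xi+1}\setminus U_\xi$, equal to some value of $f$ there; then $\|g_\varepsilon-f\|_\infty<\varepsilon$, and for any open $V\subseteq Y$ the preimage $g_\varepsilon^{-1}(V)$ is a countable union of sets $U_{\xi+1}\cap(X\setminus U_\xi)$, each of which is functionally $F_\sigma$, so $g_\varepsilon$ is functionally $F_\sigma$-measurable. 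Choosing the open exhaustion $V_k:=\{y\in Y:d(y,Y\setminus V)>1/k\}$ and $\varepsilon_k:=1/(2k)$, a routine triangle-inequality sandwich gives $f^{-1}(V)=\bigcup_k g_{\varepsilon_k}^{-1}(V_k)$, which is functionally $F_\sigma$. I expect the only real obstacle to be the verification that the joint sequence $(W_\gamma)$ in (2) really is regular --- especially at limit ordinals of the form $\beta\cdot\xi$ with $\xi$ limit --- and that its successor differences split cleanly as products of the original differences; everything else is bookkeeping combined with the standard uniform-approximation sandwich.
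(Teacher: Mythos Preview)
Your proposal is correct. For items (4) and (5) it is essentially the paper's argument: the paper uses $\varepsilon/4$ where you use $\varepsilon/3$ in (4), and for (5) it argues directly that $f^{-1}(G)$ is a countable union of successor differences $U_{\xi+1}\setminus U_\xi$ drawn from the various $\tfrac1n$-associated sequences, rather than passing through approximating step functions $g_\varepsilon$ and a sandwich. Both routes are short and equivalent; the paper's is slightly more direct.

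For items (1)--(3) the paper writes only that they ``follow straightforwardly from the definitions'' and omits the proofs, so your sketches already supply more than the paper does. One small technical glitch worth flagging: the ``prepend $F_{-1}:=X$'' manoeuvre for complement closure does not work cleanly for transfinite sequences, because a point that first leaves the chain $(F_\xi)$ at a limit stage $\gamma$ (i.e.\ $x\in\bigcap_{\xi<\gamma}F_\xi\setminus F_\gamma$) lies in neither the odd nor the even union of successor differences, so $X\setminus A$ is not literally obtained by swapping parities. This does not affect (1) itself --- each such residual piece is still functionally $F_\sigma$, since a countable intersection of zero sets is a zero set --- and it is not actually needed for (2): your joint refinement $(W_\gamma)$ already makes both $\chi_A$ and $\chi_B$ constant on every successor difference, so it simultaneously witnesses that $A\cap B$ \emph{and} $A\cup B$ are functionally countably resolvable, and the detour through De~Morgan can be dropped. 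Your flagged worry about regularity of $(W_\gamma)$ at limits of the form $\beta\cdot\xi$ is fine: one checks $W_{\beta\cdot\xi}=U_\xi=\bigcup_{\zeta<\xi}U_{\zeta+1}=\bigcup_{\zeta<\xi}W_{\beta\cdot(\zeta+1)}$ using regularity of $(U_\xi)$.
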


\begin{proof}  The properties 1)--3)  follow straightforwardly  from the definitions and we omit their proofs.

4). Let $(f_n)_{n\in\omega}$ be a sequence of functionally countably fragmented functions $f_n:X\to Y$ which is convergent uniformly to a function $f:X\to Y$.
Fix $\varepsilon>0$ and choose a number $k\in\omega$ such that $d(f_n(x),f(x))<\tfrac{\varepsilon}{4}$ for all $n\ge k$ and $x\in X$. Since $f_k$ is functionally $\varepsilon/4$-countably fragmented, one can fix a countable $\varepsilon/4$-associated sequence $\mathscr U=(U_\xi:\xi\in[0,\alpha])$ with $f_k$. Then for all $\xi\in [0,\alpha]$ and for all $x,y\in U_{\xi+1}\setminus U_\xi$ we have
\begin{gather*}
  d(f(x),f(y))\le d(f(x),f_k(x))+d(f_k(x),f_k(y))+d(f_k(y),f(y))<\frac{\varepsilon}{4}+\frac{\varepsilon}{4}+\frac{\varepsilon}{4}=\frac{3\varepsilon}{4}.
\end{gather*}
Hence, ${\rm diam}f(U_{\xi+1}\setminus U_\xi)\le \frac{3\varepsilon}{4}<\varepsilon$.

5). For every $n\in\mathbb N$ we take a sequence $\mathscr U_{n}=(U_\xi:\xi\in[0,\alpha])$ of functionally open sets which is $\frac 1n$-associated with $f$. Now let $G$ be an open set in $Y$ and $x\in f^{-1}(G)$. Take $n\in\mathbb N$ such that the open ball  $B(f(x),\frac 1n)$ with center at $f(x)$ and radius $\frac 1n$ is contained in $G$. Let $\xi\in[0,\alpha]$ be such that $x\in U_{\xi+1}\setminus U_\xi$. Then $U_{\xi+1}\setminus U_\xi\subseteq f^{-1}(G)$. Therefore, there exists a subfamily $\mathscr V\subseteq\mathscr U_n$ such that $f^{-1}(G)=\cup \{V:V\in\mathscr V\}$. Sine $\mathscr V$ is at most countable family of functionally $F_\sigma$-sets, $f^{-1}(G)$ is a functionally $F_\sigma$-set in $X$.

\end{proof}

\section{Separation theorem for functionally $G_\delta$-sets}

The proposition below is tightly connected with \cite[Proposition 11]{Kal_Spu} and we use  ideas from \cite{Kal_Spu} for the proof.

\begin{proposition}\label{prop:extGdeltaInBetaX}
 Let $A$ and $B$ be functionally ambiguous disjoint subsets of a Lindel\"{o}ff space $X$ such that $\overline{F\cap A}\ne F$ or $\overline{F\cap B}\ne F$ for every nonempty closed set $F\subseteq X$. Then there exist disjoint functionally $G_\delta$-sets $\widetilde A$ and $\widetilde B$ in $\beta X$ such that  $\widetilde A\cap X=A$ and $\widetilde B\cap X=B$.
\end{proposition}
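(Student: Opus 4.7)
The plan is to combine the functional ambiguity of $A$ and $B$ with a transfinite ``peeling'' of $X$ of countable length (forced by the hypothesis together with Lindel\"{o}fness) to carve out disjoint functionally $G_\delta$ extensions inside $\beta X$.

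First, I would exploit the functional ambiguity to write $A=\bigcap_{n} V_n$ and $B=\bigcap_{n} W_n$ with $V_n,W_n$ cozero in $X$, and extend each to a cozero set of $\beta X$ via the unique continuous extension of the bounded defining function. Setting $\widetilde A_0:=\bigcap_n \widetilde V_n$ and $\widetilde B_0:=\bigcap_n\widetilde W_n$ already yields functionally $G_\delta$ sets of $\beta X$ with $\widetilde A_0\cap X=A$ and $\widetilde B_0\cap X=B$; since $A\cap B=\emptyset$, any overlap $\widetilde A_0\cap\widetilde B_0$ is concentrated in $\beta X\setminus X$, and the whole task reduces to shaving it off.

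To perform the shaving, I would run the peeling induction: put $F_0=X$ and, given a zero subset $F_\xi\subseteq X$, let $H_\xi^A$ (resp.\ $H_\xi^B$) denote the union of all cozero subsets of $F_\xi$ disjoint from $A\cap F_\xi$ (resp.\ $B\cap F_\xi$); set $F_{\xi+1}:=F_\xi\setminus(H_\xi^A\cup H_\xi^B)$, with intersections at limit ordinals. The hypothesis guarantees $H_\xi^A\cup H_\xi^B\ne\emptyset$ whenever $F_\xi\ne\emptyset$, so the chain strictly decreases; a standard countable-subcover argument applied to the ascending open cover $(X\setminus F_\xi)_\xi$ of $X$ forces termination at some countable $\alpha$ with $F_\alpha=\emptyset$. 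Using Tietze in the normal Lindel\"{o}f space $X$ and then unique extension to $\beta X$, I would lift the cozero sets $H_\xi^A,H_\xi^B$ to cozero sets $\widetilde G_\xi^A,\widetilde G_\xi^B$ of $\beta X$ whose traces on $F_\xi$ recover $H_\xi^A,H_\xi^B$, and lift the defining functions of the $F_\xi$'s compatibly (so that the induced extensions $\widetilde F_\xi$ form a decreasing sequence of zero sets of $\beta X$ with $\widetilde F_\xi\cap X=F_\xi$). Finally set
\[
\widetilde A\;:=\;\widetilde A_0\setminus\bigcup_{\xi<\alpha}\bigl(\widetilde G_\xi^A\cap\widetilde F_\xi\bigr),\qquad
\widetilde B\;:=\;\widetilde B_0\setminus\bigcup_{\xi<\alpha}\bigl(\widetilde G_\xi^B\cap\widetilde F_\xi\bigr).
\]
Each subtracted set is a countable union of cozero-$\cap$-zero (hence functionally $F_\sigma$) pieces, so $\widetilde A$ and $\widetilde B$ are functionally $G_\delta$ in $\beta X$; the traces on $X$ come out correctly because $\widetilde G_\xi^A\cap F_\xi\cap A=H_\xi^A\cap A=\emptyset$.

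The main obstacle, as I expect, is the disjointness $\widetilde A\cap\widetilde B=\emptyset$ in $\beta X$. A common point $p$ would lie in $\widetilde A_0\cap\widetilde B_0\cap(\beta X\setminus X)$ and avoid every $\widetilde G_\xi^A\cap\widetilde F_\xi$ and $\widetilde G_\xi^B\cap\widetilde F_\xi$; since the extensions of zero sets from $X$ to $\beta X$ do \emph{not} commute with countable intersections, $p$ could in principle sit in $\bigcap_\xi\widetilde F_\xi\setminus X$ and slip through every layer. Ruling this out demands a genuine compactness-plus-maximality argument in $\beta X$: given such $p$, one must use the maximality of $H_\xi^A,H_\xi^B$ inside each $F_\xi$ to produce an explicit cozero neighborhood of $p$ in $\beta X$ witnessing membership in some $\widetilde G_\xi^A\cap\widetilde F_\xi$ or $\widetilde G_\xi^B\cap\widetilde F_\xi$, yielding a contradiction. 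This last step is the technical heart of the argument and is where I expect to import the key idea from \cite[Proposition~11]{Kal_Spu}.
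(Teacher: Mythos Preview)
Your opening move---lifting $A$ and $B$ to functionally $G_\delta$ sets $\widetilde A_0,\widetilde B_0$ in $\beta X$ via $C^*$-embedding---matches the paper exactly, and you are right that the entire difficulty is the disjointness of the final $\widetilde A,\widetilde B$. But the peeling-in-$X$ framework you build is not equipped to deliver that step, and the gap you candidly flag at the end is genuine and not closable within your scheme.

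The obstruction is structural. Your sequence terminates at $F_\alpha=\emptyset$ \emph{in $X$}, yet after lifting nothing forces $\bigcap_\xi\widetilde F_\xi=\emptyset$, nor forces $\bigcup_\xi(\widetilde G_\xi^A\cap\widetilde F_\xi)\cup\bigcup_\xi(\widetilde G_\xi^B\cap\widetilde F_\xi)$ to cover $\widetilde A_0\cap\widetilde B_0$. A hypothetical $p\in\widetilde A\cap\widetilde B$ sits in $\beta X\setminus X$; the ``maximality of $H_\xi^A,H_\xi^B$'' you hope to invoke is maximality among cozero subsets of $F_\xi\subseteq X$ and says nothing about neighborhoods of such a $p$ in $\beta X$. (There are secondary issues as well---$H_\xi^A=F_\xi\setminus\overline{A\cap F_\xi}$ is open in $F_\xi$ but need not be cozero, and the $F_\xi$ need not be zero sets, so the lifts you describe are not automatic---but these are not the decisive obstacle.)

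The paper's argument, following \cite[Proposition~11]{Kal_Spu}, works in $\beta X$ from the outset. With $C=\widetilde A_0\cap\widetilde B_0$, it defines $D\subseteq C$ as the set of points possessing a $C$-neighborhood covered by two functionally $F_\sigma$ subsets of $\beta X$, one disjoint from $A$ and the other from $B$. If $C\setminus D\ne\emptyset$, the hypothesis on $X$ applied (via traces) to $F=\overline{C\setminus D}$ yields a cozero $U\ni x_0\in C\setminus D$ with, say, $U\cap F\cap A=\emptyset$; Lindel\"{o}fness of $A\cap U$ (an $F_\sigma$ in $X$) then produces a cozero $V\supseteq A\cap U$ missing $F$, and Lindel\"{o}fness of $V\cap C\subseteq D$ lets one assemble the separating $F_\sigma$ pair at $x_0$, contradicting $x_0\notin D$. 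Finally Lindel\"{o}fness of $C$ gives a countable subcover and the subtraction. The essential difference from your plan is that the ``bad set'' $C\setminus D$ lives in $\beta X$ from the start, so its closure can be fed back into the hypothesis on $X$; your peeling, carried out entirely inside $X$, never engages the points of $\beta X\setminus X$ where the overlap actually resides.
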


\begin{proof} Since $X$ is $C^*$-embedded in $\beta X$, there exist functionally $G_\delta$-sets $A_1$ and $B_1$ in $\beta X$ such that $A_1\cap X=A$ and $B_1\cap X=B$. We put $C=A_1\cap B_1$. Clearly, we need to consider the case $C\ne\emptyset$ only.

 Let $D$ stands for the set of all points $x\in C$ for which there exist functionally $F_\sigma$-sets $U_x, V_x\subseteq \beta X$ such that  the set $W_x=(U_x\cup V_x)\cap C$ is a neighborhood of $x$ in $C$ and $U_x\cap A=\emptyset=V_x\cap B$.

 We prove that $D=C$. Suppose to the contrary that $C\setminus D\ne\emptyset$ and put $F=\overline{C\setminus D}$. The conditions of the proposition imply that $\overline{F\cap A}\ne F$ or $\overline{F\cap B}\ne F$. For definiteness we assume that $\overline{F\cap A}\ne F$. Then there exist a point $x_0\in C\setminus D$ and a functionally open in $Z$ neighborhood $U$ of $x_0$ such that $U\cap F\cap A=\emptyset$. Note that the set $A_1=A\cap U$ is Lindel\"{o}ff as an $F_\sigma$-subset of the Lindel\"{o}ff space $X$. Since $A_1\cap F=\emptyset$, there exists a functionally open set $V\supseteq A_1$ in $\beta X$ such that $V\cap F=\emptyset$. Therefore, $V\cap(C\setminus D)=\emptyset$ and $D_1=V\cap C\subseteq D$. According to \cite[Proposition 5]{Kal_Spu}, the set $D_1$ is Lindel\"{o}ff. Hence, there exists an countable set $E\subseteq D_1$ such that $D_1 \subseteq \bigcup_{d\in E}W_d$. Now we put
 \begin{gather*}
 U_{x_0}=(U\setminus V)\cup\bigcup_{d\in E}U_d \quad\mbox{and}\quad V_{x_0}=\bigcup_{d\in E}V_d.
 \end{gather*}
Notice that $U_{x_0}$ and $V_{x_0}$ are functionally $F_\sigma$-sets in $\beta X$, $U_{x_0}\cap A=\emptyset=V_{x_0}\cap B$ and
$$
U\cap C \subseteq (U\setminus V)\cup (V\cap C)=(U\setminus V)\cup D_1\subseteq U_{x_0}\cup V_{x_0}.
$$
It follows that $x_0\in D$, which contradicts to the choice of $x_0$.

Since $C$ is Lindel\"{o}ff, there exists a countable set $\{c_n:n\in\omega\}\subseteq C$ such that $C\subseteq\bigcup_{n\in\omega}W_{c_n}$. It remains to put
\begin{gather*}
\widetilde A=A_1\setminus \left(\bigcup_{n\in\omega}U_{c_n}\right)\quad\mbox{and}\quad\widetilde B=B_1\setminus \left(\bigcup_{n\in\omega}V_{c_n}\right).
\end{gather*}
\end{proof}

\begin{proposition}\label{prop:SeparationTh}
  Let $A$ and $B$ be functionally ambiguous disjoint subsets of a Lindel\"{o}ff space $X$ such that $\overline{F\cap A}\ne F$ or $\overline{F\cap B}\ne F$ for every nonempty closed set $F\subseteq X$. Then there exists a  functionally countably resolvable set $C$ in $X$ such that $A\subseteq C\subseteq X\setminus B$.
\end{proposition}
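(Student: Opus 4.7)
The strategy is a Cantor--Bendixson style transfinite recursion in $X$ producing the decreasing sequence of functionally closed sets required by the definition of a functionally countably $H$-set. Set $F_0=X$ and proceed by induction on ordinals $\xi$. Given a nonempty functionally closed $F_\xi$, the density hypothesis furnishes a nonempty relatively open subset $V\subseteq F_\xi$ disjoint from $A$ or from $B$; pick $x\in V$ and apply complete regularity of $X$ to $x$ and the closed set $F_\xi\setminus V$ to obtain a cozero set $W_\xi\subseteq X$ with $x\in W_\xi$ and $W_\xi\cap F_\xi\subseteq V$. Set $F_{\xi+1}=F_\xi\cap(X\setminus W_\xi)$, an intersection of two zero sets of $X$ and hence functionally closed, and attach a colour in $\{A,B\}$ to the step $\xi$ according to which disjointness $V$ had. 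At a countable limit stage $\gamma$, set $F_\gamma=\bigcap_{\xi<\gamma}F_\xi$, again functionally closed as a countable intersection of zero sets.

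The candidate separating set is
\[
C=\bigcup\{F_\xi\setminus F_{\xi+1}:\text{colour at }\xi\text{ is }B\}.
\]
By construction each $A$-coloured difference is disjoint from $A$ and each $B$-coloured one is disjoint from $B$, so $C\cap B=\emptyset$; and because every $a\in A$ lies in a unique difference whose colour must be $B$ (an $A$-coloured difference cannot contain it), we also have $A\subseteq C$.

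The main obstacle is twofold: termination of the recursion at a countable ordinal, and re-indexing the resulting sequence so that the $B$-coloured differences land at the odd indices required by the definition of an $H$-set. For termination, strict decrease at each step (each $W_\xi$ meets $F_\xi$) forces the recursion to reach $F_\alpha=\emptyset$ at some ordinal $\alpha$, and then the increasing family of open sets $\{X\setminus F_{\xi+1}:\xi<\alpha\}$ covers $X$, whence the Lindel\"of property of $X$ extracts a countable subcover, giving a countable $\beta\le\alpha$ with $F_\beta=\emptyset$. For the re-indexing, collapse each maximal block of consecutive same-colour steps into a single step by deleting the intermediate $F_\xi$'s (the union of same-coloured differences is still disjoint from the relevant set), obtaining a subsequence along which the colours strictly alternate, and then insert a trivial initial duplicate stage $F'_0=F'_1=X$ if necessary to shift parity so that the $B$-coloured differences occupy exactly the odd indices. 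The resulting representation $C=\bigcup_{k\in\omega}(F'_{2k+1}\setminus F'_{2k+2})$ exhibits $C$ as a functionally countably resolvable subset of $X$ with $A\subseteq C\subseteq X\setminus B$.
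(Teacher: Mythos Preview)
Your termination argument has a genuine gap. From the Lindel\"of property you extract a countable subcover $\{X\setminus F_{\xi_n+1}:n\in\omega\}$ of $X$ and deduce $\bigcap_n F_{\xi_n+1}=\emptyset$; but the ordinals $\xi_n$ are merely $<\alpha$, not necessarily countable. All this shows is that $\alpha=\sup_n(\xi_n+1)$ has countable cofinality, not that $\alpha<\omega_1$. Your recursion removes only one cozero neighbourhood of a single point at each successor step, and nothing prevents it from running for $\omega_1$ steps---in which case $F_{\omega_1}$ is an uncountable intersection of zero sets and need not be functionally closed.

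More tellingly, you never invoke the hypothesis that $A$ and $B$ are functionally ambiguous, and without it the conclusion is simply false. Let $X$ be the one-point compactification of a discrete set $D$ of size $\aleph_1$ and partition $D$ into two uncountable pieces $A$ and $B$. Then $X$ is compact (hence Lindel\"of and completely regular), and the density hypothesis holds: every nonempty closed $F$ either contains an isolated point of $D$ or equals $\{\infty\}$. Yet any separating set $C$ with $A\subseteq C\subseteq X\setminus B$ is either $A$ or $A\cup\{\infty\}$, and neither is functionally ambiguous (an $F_\sigma$ subset of $X$ avoiding $\infty$ is countable, while a $G_\delta$ set containing $\infty$ is co-countable), so neither is functionally countably resolvable. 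If you run your recursion here, each $W_\xi$ is a cozero set avoiding $\infty$ and hence countable; thus for every $\xi<\omega_1$ the set $F_\xi$ still contains $\infty$ and uncountably many points of both $A$ and $B$, and the only member of your open cover that contains $\infty$ has index $\ge\omega_1$.

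The paper's argument is entirely different and uses the ambiguity hypothesis in an essential way: it passes to $\beta X$, lifts $A$ and $B$ to disjoint functionally $G_\delta$ sets there (this is where both functional ambiguity and the density condition enter, via Proposition~\ref{prop:extGdeltaInBetaX}), separates them by a functionally ambiguous $\widetilde C\subseteq\beta X$ using Sierpi\'nski's theorem, notes that on the compact space $\beta X$ the Baire-one function $\chi_{\widetilde C}$ is automatically functionally countably fragmented, and restricts to $X$.
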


\begin{proof}
  By Proposition~\ref{prop:extGdeltaInBetaX} there are disjoint functionally $G_\delta$-sets $\widetilde A$ and $\widetilde B$ in $\beta X$ such that  $\widetilde A\cap X=A$ and $\widetilde B\cap X=B$.  Applying  functional version of Sierpi\'{n}ski's Separation Theorem~(see \cite[Lemma 4.2]{Karlova:CMUC:2013} and \cite[p.~350]{Ku2}) we obtain functionally ambiguous set $\widetilde C\subseteq \beta X$ such that $\widetilde A\subseteq \widetilde C\subseteq \beta X\setminus \widetilde B$. It is easy to see that $f=\chi_{\widetilde C}\in{\rm B}_1(\beta X,\mathbb R)$. Then $f$ is functionally countably fragmented by \cite[Proposition 2]{Karlova:Mykhaylyuk:2017:ExtensionB1}. Therefore, the set $C=\widetilde C\cap X$ is functionally countably resolvable.
\end{proof}

\section{Extension of functionally countably fragmented maps with values in R-spaces}
A metric space $(Y,d)$ is said to be an {\it R-space}, if for every $\varepsilon>0$ there exists a continuous map   $r_{\varepsilon}:Y\times Y\to Y$ with the following properties:
\begin{gather*}
  d(y,z)\le\varepsilon\,\, \Longrightarrow\,\, r_\varepsilon(y,z)=y,\\
d(r_\varepsilon(y,z),z)\le\varepsilon
\end{gather*}
for all $y,z\in Y$.

It is worth noting that every convex subset $Y$ of a normed space $(Z,\|\cdot\|)$ equipped with the metric induced from $(Z,\|\cdot\|)$ is an R-space, where the map $r_{\varepsilon}$ is defined as
\begin{gather*}
r_{\varepsilon}(y,z)=\left\{\begin{array}{ll}
                              z+(\varepsilon/\|y-z\|)\cdot (y-z), & \|y-z\|>\varepsilon, \\
                              y & \mbox{otherwise}.
                           \end{array}
\right.
\end{gather*}

\begin{remark}\label{rem:local_conn_R_space}
{\rm It follows from the definition that every path-connected R-space is locally path-connected.}
\end{remark}

\begin{lemma}\label{lem:unif_conv_ext_R_space}
Let $X$ be a topological space, $E\subseteq X$, $(Y,d)$ be a metric R-space  and $f\in {\rm B}_1(E,Y)$. If there exists a sequence of functions $f_n\in {\rm B}_1(X,Y)$ such that $(f_n)_{n=1}^\infty$ converges uniformly to $f$ on $E$, then $f$ can be extended to a function $g\in {\rm B}_1(X,Y)$.
\end{lemma}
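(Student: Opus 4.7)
The plan is to modify the given approximating sequence $(f_n)$ by iterated applications of the retractions $r_{\varepsilon_n}$ so as to produce a new sequence $(g_n)\subseteq B_1(X,Y)$ that agrees with $(f_n)$ on $E$ (up to passing to a subsequence) and is uniformly Cauchy on \emph{all} of $X$; its uniform limit will then be the desired extension $g$.

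Concretely, I would first pass to a subsequence to arrange $d(f_n(x),f(x))<2^{-(n+2)}$ for every $x\in E$, and set $\varepsilon_n=2^{-n}$, so that $d(f_n(x),f_{n+1}(x))<\varepsilon_n$ on $E$. Define $g_1=f_1$ and inductively
\[
  g_{n+1}(x)=r_{\varepsilon_n}\bigl(f_{n+1}(x),\,g_n(x)\bigr),\qquad x\in X.
\]
To see that $g_{n+1}\in B_1(X,Y)$, take sequences of continuous functions $(f_{n+1}^k)_k$ and $(g_n^k)_k$ converging pointwise to $f_{n+1}$ and $g_n$ respectively; the pairings $(f_{n+1}^k,g_n^k)\colon X\to Y\times Y$ are continuous and converge pointwise to $(f_{n+1},g_n)$, and composing with the continuous map $r_{\varepsilon_n}\colon Y\times Y\to Y$ yields continuous functions converging pointwise to $g_{n+1}$.

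The second defining property of $r_\varepsilon$ guarantees $d(g_{n+1}(x),g_n(x))\le \varepsilon_n$ for \emph{every} $x\in X$, so $(g_n)$ is uniformly Cauchy on $X$, hence (using completeness of $Y$, which is implicit in the applications) converges uniformly to some $g\colon X\to Y$, and $g\in B_1(X,Y)$ because the class of Baire-one functions into a metric space is closed under uniform limits. An induction using the \emph{first} defining property of $r_\varepsilon$ --- namely, that $r_\varepsilon(y,z)=y$ whenever $d(y,z)\le\varepsilon$ --- shows $g_n=f_n$ on $E$: the base case is $g_1=f_1$, and in the inductive step $d(f_{n+1}(x),g_n(x))=d(f_{n+1}(x),f_n(x))<\varepsilon_n$ on $E$, so $g_{n+1}(x)=f_{n+1}(x)$ there. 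Passing to the limit gives $g=f$ on $E$. The delicate point I expect is the Baire-one preservation under the continuous binary operation $r_{\varepsilon_n}$ applied to a pair of Baire-one functions; this rests on the (standard but worth noting) fact that for metric targets a map into a product is Baire-one iff both coordinates are, which is what legitimises the simultaneous continuous approximation used above.
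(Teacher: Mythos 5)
Your proposal is correct and follows essentially the same argument as the paper: pass to a subsequence with $d(f_{n+1},f_n)\le 2^{-n}$ on $E$, iterate $g_{n+1}=r_{2^{-n}}(f_{n+1},g_n)$, use the second property of $r_\varepsilon$ for uniform Cauchyness on $X$ and the first for $g_n=f_n$ on $E$, then take the uniform limit. Your explicit justification that $g_{n+1}\in{\rm B}_1(X,Y)$ via pairings is a welcome elaboration of the paper's one-line remark, and your caveat about completeness of $Y$ matches the paper's reliance on a cited theorem on uniform limits.
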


\begin{proof}  Let $(r_n)_{n=1}^\infty$ be a sequence of continuous functions $r_n:Y\times Y\to Y$ such that
\begin{gather}
  d(y,z)\le \frac{1}{2^n}\,\, \Longrightarrow\,\, r_n(y,z)=y,\label{gath:u2}\\
d(r_n(y,z),z)\le\frac{1}{2^n}
\end{gather}
for all $y,z\in Y$ and $n\in\omega$.

Without loss of generality we may assume that
\begin{gather}\label{gath:u1}
d(f_{n+1}(x),f_{n}(x))\le \frac{1}{2^n}
\end{gather}
for all $n\in\omega$ and $x\in E$.

For all $x\in X$ we define
\begin{gather*}
  g_1(x)=f_1(x),\\
  g_n(x)=r_{n-1}(f_n(x),g_{n-1}(x))\,\,\,\mbox{for all}\,\, n\ge 2.
\end{gather*}
Then every $g_n:X\to Y$ belongs to the first Baire class as a composition of continuous and Baire-one functions.

Let us observe that
\begin{gather*}
  d(g_{n+1}(x),g_{n}(x))=d(r_{n}(f_{n+1}(x),g_{n}(x)),g_n(x))\le\frac{1}{2^n}
\end{gather*}
for all $x\in X$ and $n\in\omega$. It follows that the sequence $(g_n)_{n=1}^\infty$ is uniformly convergent to a Baire-one map $g:X\to Y$ \cite[Theorem 4]{Karlova:Mykhaylyuk:2017:Limits}.

If $x\in E$, then (\ref{gath:u1}) and (\ref{gath:u2}) imply that
\begin{gather*}
  g_2(x)=r_1(f_2(x),g_1(x))=r_1(f_2(x),f_1(x))=f_2(x).
\end{gather*}
Assume that
\begin{gather*}
  g_k(x)=f_k(x)
\end{gather*}
for all $k\le n$. Then
\begin{gather*}
  g_{n+1}(x)=r_{n}(f_{n+1}(x),g_{n}(x)=r_n(f_{n+1}(x),f_n(x))=f_{n+1}(x).
\end{gather*}
Therefore,
\begin{gather*}
g(x)=\lim_{n\to\infty}g_n(x)=\lim_{n\to\infty}f_n(x)=f(x)
\end{gather*}
for all $x\in E$.
\end{proof}

Recall that a subspace $E$ of a topological space $X$ is {\it $z$-embedded} in $X$~\cite{GillJer} if for every functionally closed set $A$ in $E$ there exists a functionally closed set $B$ in $X$ such that $B\cap E=A$. It is well-known that each Lindel\"{o}ff subspace of a completely regular space  is $z$-embedded~\cite{BlairHager}.

The proof of the following result is quite similar to the proof of \cite[Proposition 4]{Karlova:Mykhaylyuk:2017:ExtensionB1}.

\begin{proposition}\label{prop:ext_countable_frag_ness_R}
  Let $E$ be a $z$-embedded subspace of a completely regular space $X$, $Y$ be a path-connected separable R-space and $f:E\to Y$ be a functionally countably fragmented  function. Then  $f$ can be extended to a functionally countably fragmented function $g\in {\rm B}_1(X,Y)$.
  \end{proposition}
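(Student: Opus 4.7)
\medskip\noindent\textbf{Proof plan.} My plan is to approximate $f$ uniformly on $E$ by countable-valued, functionally countably fragmented functions, extend each approximation to a Baire-one function on $X$ in a way that preserves functional countable fragmentation, and then assemble the resulting extensions via Lemma~\ref{lem:unif_conv_ext_R_space}.

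\emph{Approximation and lifting.} For each $n\in\mathbb N$ I fix a countable regular sequence $\mathscr U_n=(U^n_\xi)_{\xi\le\alpha_n}$ of functionally open subsets of $E$ which is $\tfrac{1}{n}$-associated with $f$. Choosing a point $x^n_\xi$ in each nonempty slice $U^n_{\xi+1}\setminus U^n_\xi$ and letting $y^n_\xi=f(x^n_\xi)$, I define $\varphi_n\colon E\to Y$ to equal $y^n_\xi$ on that slice; then $d(\varphi_n(x),f(x))<1/n$ on $E$, so $\varphi_n\to f$ uniformly on $E$. Since $E$ is $z$-embedded in $X$, every functionally open subset of $E$ extends to a functionally open subset of $X$, and countable unions preserve functional openness. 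This lets me build by transfinite recursion a regular sequence $\widetilde{\mathscr U}_n=(W^n_\xi)_{\xi\le\alpha_n+1}$ of functionally open subsets of $X$ with $W^n_\xi\cap E=U^n_\xi$ and $W^n_{\alpha_n+1}=X$: at successor steps I union the chosen extension with the previously constructed terms to force monotonicity, and at limit ordinals I take the union (which stays functionally open because $\alpha_n$ is countable).

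\emph{Baire-one realization.} The main obstacle is to produce a Baire-one extension $\tilde\varphi_n\in{\rm B}_1(X,Y)$ which takes the value $y^n_\xi$ on each slice $W^n_{\xi+1}\setminus W^n_\xi$; the regular sequence $\widetilde{\mathscr U}_n$ will then witness its functional $\tfrac{1}{n}$-countable fragmentation. This is exactly where the hypothesis that $Y$ is a path-connected separable R-space (hence locally path-connected by Remark~\ref{rem:local_conn_R_space}), together with complete regularity of $X$, is essential. The target piecewise-constant function is realized as a pointwise limit of continuous $X\to Y$ maps obtained by interpolating along continuous paths in $Y$ between consecutive values $y^n_\xi$, with interpolation parameters supplied by scalar Urysohn functions separating the nested pairs $W^n_\xi\subseteq W^n_{\xi+1}$; separability of $Y$ ensures the required data can be encoded countably. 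This is precisely the portion of the argument that parallels the construction in the proof of Proposition~4 of \cite{Karlova:Mykhaylyuk:2017:ExtensionB1}.

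\emph{Assembly.} Since $(\tilde\varphi_n)\subseteq{\rm B}_1(X,Y)$ converges uniformly to $f$ on $E$, Lemma~\ref{lem:unif_conv_ext_R_space} supplies $g\in{\rm B}_1(X,Y)$ with $g|_E=f$. Tracking its proof, $g$ arises as the uniform limit on $X$ of the iterates $g_k=r_{k-1}(\tilde\varphi_k,g_{k-1})$. An induction on $k$, using continuity of the retractions $r_{k-1}$ together with the stability of functional countable fragmentation under products of maps and continuous postcompositions, shows each $g_k$ is functionally countably fragmented. Hence so is its uniform limit $g$ by Proposition~\ref{prop:properties}(4), which completes the extension.
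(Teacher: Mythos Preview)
Your overall strategy---uniformly approximate $f$ by piecewise-constant maps along the $\tfrac1n$-associated sequences, lift the sequences to $X$ via $z$-embedding, show the resulting step functions are Baire-one, then assemble via Lemma~\ref{lem:unif_conv_ext_R_space}---is exactly the paper's. The substantive difference is that the paper begins by reducing to the case where $X$ is compact (replacing $X$ by $\beta X$), and this matters precisely at the step you handle differently.

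The issue is your last paragraph. To conclude that the extension $g$ is functionally countably fragmented, you argue that each iterate $g_k=r_{k-1}(\tilde\varphi_k,g_{k-1})$ is functionally countably fragmented because this class is stable under products and continuous postcomposition. Stability under products is fine (a lexicographic interleaving of the two associated sequences works). But stability under postcomposition by a merely continuous map $r_{k-1}\colon Y\times Y\to Y$ is not obvious: given $\varepsilon>0$, one cannot choose a single $\delta$-associated sequence for the inner map, since $r_{k-1}$ need not be uniformly continuous on the (generally non-totally-bounded) image. You assert this closure property without proof or citation, and it is not among the facts recorded in Proposition~\ref{prop:properties}. Even if the statement is ultimately true, establishing it would require a genuine argument that you have not supplied.

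The paper sidesteps this entirely: once one works on $\beta X$, the extension $g$ is a Baire-one map on a compact space, and \cite[Proposition~2]{Karlova:Mykhaylyuk:2017:ExtensionB1} gives functional countable fragmentation directly. Inserting that reduction at the start (as the paper does) repairs your argument with no further changes. A second, minor difference: where you sketch a path-interpolation construction to show $\tilde\varphi_n\in{\rm B}_1(X,Y)$, the paper simply notes that $\tilde\varphi_n$ is functionally $F_\sigma$-measurable and invokes \cite[Theorem~4.1]{Karlova:EJM:2016} together with Remark~\ref{rem:local_conn_R_space}.
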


  \begin{proof} Let us observe that we may assume the space $X$ to be compact. Indeed, $E$ is $z$-embedded in $\beta X$, since $X$ is $C^*$-embedded in $\beta X$, and if we can extend $f$ to a  functionally countably fragmented function $h\in {\rm B}_1(\beta X,Y)$, then the restriction $g=h|_E$ is a  functionally  countably fragmented extension of $f$ on $X$ and $g\in {\rm B}_1(X,Y)$.

  Fix $n\in\mathbb N$ and consider  $\frac 1n$-associated with $f$ sequence $\mathscr U=(U_\xi:\xi\leq\alpha)$. Without loss of the generality we can assume that all sets $U_{\xi+1}\setminus U_\xi$ are nonempty.
   Since $E$ is $z$-embedded in $X$, one can choose a countable family $\mathscr V=(V_\xi:\xi\leq\alpha)$ of functionally open sets in $X$ such that $V_{\xi}\subseteq V_{\eta}$ for all $\xi\le\eta\leq\alpha$, $V_{\xi}\cap E=U_{\xi}$ for every $\xi\leq\alpha$ and $V_\eta=\bigcup_{\xi<\eta}V_\xi$ for every limit ordinal $\eta\leq \alpha$. For every $\xi\in[0,\alpha)$ we take an arbitrary point $y_\xi\in f(U_{\xi+1}\setminus U_\xi)$. Now for every $x\in X$ we put
  $$
  f_n(x)=\left\{\begin{array}{ll}
              y_\xi, &  x\in V_{\xi+1}\setminus V_\xi,\\
              y_0,   & x\in X\setminus V_\alpha.
                \end{array}\right.
   $$
    Observe that $f_n:X\to \mathbb R$ is functionally $F_\sigma$-measurable, since the preimage $f^{-1}(W)$ of any open set $W\subseteq Y$ is an at most countable union of functionally $F_\sigma$-sets from the system $\{V_{\xi+1}\setminus V_\xi:\xi\in[0,\alpha)\}\cup\{X\setminus V_\alpha\}$. Remark~\ref{rem:local_conn_R_space} and~\cite[Theorem~4.1]{Karlova:EJM:2016} imply that $f_n\in {\rm B}_1(X,Y)$.

  It is easy to see that the sequence $(f_n)_{n=1}^\infty$ is uniformly convergent to $f$ on $E$. Now it follows from Lemma~\ref{lem:unif_conv_ext_R_space} that $f$ can be extended to a function $g\in {\rm B}_1(X,Y)$. According to \cite[Proposition 2]{Karlova:Mykhaylyuk:2017:ExtensionB1}, $g$ is functionally countably fragmented.
  \end{proof}

\section{Uniform approximation of bounded fragmented Baire-one functions}

A topological space $X$ is said to satisfy the {\it functionally discrete countable chain
condition (functionally DCCC)}, if every discrete collection of functionally open sets is at most countable.

\begin{proposition}\label{prop:sep_image}
  Let $X$ be a functionally DCCC space  and $Y$ be a topological space. If $f:X\to Y$ is a $\sigma$-strongly functionally discrete map, then  $f(X)$ has CCC.
\end{proposition}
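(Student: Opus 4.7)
The plan is to inject any pairwise disjoint family of nonempty open sets in $f(X)$ into an at most countable family of subsets of $X$ produced from the $\sigma$-strongly functionally discrete structure of $f$. To this end, I would fix a pairwise disjoint family $\{V_\alpha:\alpha\in I\}$ of nonempty open sets in $f(X)$, write each $V_\alpha=U_\alpha\cap f(X)$ for some open $U_\alpha\subseteq Y$, and choose $x_\alpha\in X$ with $f(x_\alpha)\in V_\alpha$. The aim is then to produce a countable family $\mathscr{A}\subseteq 2^X$ together with an injection $\alpha\mapsto A_\alpha\in\mathscr{A}$.

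Unpack the hypothesis as follows: the $\sigma$-strong functional discreteness of $f$ yields a decomposition $\mathscr{A}=\bigcup_{n\in\omega}\mathscr{A}_n$ of a family $\mathscr{A}\subseteq 2^X$ such that each $\mathscr{A}_n$ is strongly functionally discrete, i.e.\ separated by a discrete family of functionally open sets, and such that $\{f(A):A\in\mathscr{A}\}$ behaves as a network for $f$: for each $x\in X$ and each open $V\subseteq Y$ with $f(x)\in V$, some $A\in\mathscr{A}$ satisfies $x\in A$ and $f(A)\subseteq V$. Because $X$ is functionally DCCC, the witnessing discrete family of functionally open sets for each $\mathscr{A}_n$ is at most countable, hence so is $\mathscr{A}_n$, and therefore $\mathscr{A}$ itself is countable. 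Applying the network property to each pair $(x_\alpha,U_\alpha)$ yields $A_\alpha\in\mathscr{A}$ with $x_\alpha\in A_\alpha$ and $f(A_\alpha)\subseteq U_\alpha$. If $A_\alpha=A_\gamma$ for distinct $\alpha,\gamma$, then $f(x_\alpha)\in f(A_\alpha)\subseteq U_\gamma$, whence $f(x_\alpha)\in U_\alpha\cap U_\gamma\cap f(X)=V_\alpha\cap V_\gamma$, contradicting disjointness. Consequently $\alpha\mapsto A_\alpha$ is injective and $I$ is at most countable.

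The main technical point, and the place where the precise reading of ``$\sigma$-strongly functionally discrete'' enters, is arranging the family $\mathscr{A}$ to satisfy simultaneously the strong functional discreteness condition inside $X$ and the network condition with respect to the open sets of $Y$; once this bookkeeping is read off from the definition, the functionally DCCC hypothesis supplies countability of $\mathscr{A}$ automatically, and the injectivity of $\alpha\mapsto A_\alpha$ is forced by the pairwise disjointness of the $V_\alpha$, completing the proof.
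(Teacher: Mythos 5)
Your argument is correct and follows essentially the same route as the paper: use functionally DCCC to see that each strongly functionally discrete layer of the $\sigma$-sfd base is countable, then separate the disjoint open sets by distinct base elements. The paper's only cosmetic difference is that it assigns to each open set the whole (pairwise disjoint, nonempty) collection $\{k:B_k\subseteq f^{-1}(G_s)\}$ of base indices rather than a single chosen element, but the mechanism is identical.
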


\begin{proof}
   We take a $\sigma$-sfd base $\mathscr B=(\mathscr B_n:n\in\omega)$ for $f$ consisting of sfd families $\mathscr B_n$  in $X$.
    Notice that each family $\mathscr B_n$ is at most countable. Then $\mathscr B$ is also at most countable and let $\{B_k:k\in\omega\}$ be an enumeration of $\mathscr B$.

We consider  a disjoint family  $\mathscr G=(G_s:s\in S)$ of open sets in $f(X)$ and a map $\varphi:S\to 2^\omega$,
 $$
 \varphi(s)=\{k\in\omega: B_k\subseteq f^{-1}(G_s)\}.
 $$
Since $(\varphi(s):s\in S)$ is a family of mutually disjoint subsets of $\omega$, it is at most countable.
\end{proof}

\begin{theorem}\label{thm:approx_finite_valued}
  Let $X$ be a Lindel\"{o}ff space, $(Y,d)$ be a completely bounded metric space and \mbox{$f:X\to Y$} be a fragmented Baire-one map.
 Then there exists a sequence of finite-valued functionally countably fragmented maps   which is uniformly convergent to $f$ on $X$.
\end{theorem}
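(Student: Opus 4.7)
Fix $\varepsilon>0$; it suffices to construct a single finite-valued functionally countably fragmented $g_\varepsilon:X\to Y$ with $\sup_{x\in X}d(g_\varepsilon(x),f(x))<\varepsilon$, since then $(g_{1/n})_{n\in\mathbb N}$ gives the required sequence. By total boundedness of $(Y,d)$, choose a finite $(\varepsilon/4)$-net $y_1,\dots,y_m\in Y$. The target is a partition $X=C_1\sqcup\cdots\sqcup C_m$ into functionally countably resolvable pieces with $C_i\subseteq f^{-1}(B(y_i,\varepsilon))$; then $g_\varepsilon\equiv y_i$ on $C_i$ will be finite-valued, within $\varepsilon$ of $f$, and functionally countably fragmented by Proposition~\ref{prop:properties}(3).

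For each $k=1,\dots,m$ I would apply Proposition~\ref{prop:SeparationTh} to the disjoint pair
\begin{gather*}
A_k:=f^{-1}(\overline{B(y_k,r)}), \qquad B_k:=X\setminus f^{-1}(B(y_k,R)),
\end{gather*}
with chosen radii $\varepsilon/4<r<R<\varepsilon/2$, to obtain a functionally countably resolvable $E_k\subseteq f^{-1}(B(y_k,\varepsilon))$ with $A_k\subseteq E_k\subseteq X\setminus B_k$. Since the net is $(\varepsilon/4)$-dense, $\bigcup_k A_k=X$ and so $\bigcup_k E_k=X$; the disjointification $C_k:=E_k\setminus\bigcup_{j<k}E_j$ is then a functionally countably resolvable partition of $X$ (by Proposition~\ref{prop:properties}(2) together with the standard closure of Hausdorff-resolvable sets under complements). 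The separation hypothesis of Proposition~\ref{prop:SeparationTh}, namely ``$\overline{F\cap A_k}\ne F$ or $\overline{F\cap B_k}\ne F$ for every nonempty closed $F\subseteq X$'', is immediate from $(R-r)$-fragmentedness of $f$: a relatively open $U\subseteq F$ with $\mathrm{diam}\,f(U)<R-r$ cannot simultaneously meet $\overline{B(y_k,r)}$ and $Y\setminus B(y_k,R)$, so $U$ is disjoint from $A_k$ or from $B_k$.

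The main obstacle is the \emph{functional ambiguity} of $A_k$ and $B_k$ required by Proposition~\ref{prop:SeparationTh}. Baire-oneness of $f$ and continuity of $y\mapsto d(y,y_k)$ make $\varphi_k(x):=d(f(x),y_k)$ a real Baire one function on the Lindel\"of space $X$, and while this automatically places $\{\varphi_k\le r\}$ and $\{\varphi_k\ge R\}$ in the ordinary $G_\delta$ and $F_\sigma$ classes, functional ambiguity is more delicate. I would handle this by perturbing $r$ and $R$ inside the annulus $(\varepsilon/4,\varepsilon/2)$ to a co-countable set of ``good'' radii at which the level sets of $\varphi_k$ lie simultaneously in the functionally $F_\sigma$ and functionally $G_\delta$ classes (using that the Lindel\"of hypothesis forces $\varphi_k$ to have at most countably many radii where the $\{\varphi_k=s\}$-slice obstructs passage to the functional class). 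Since the fragmentation-based verification of the separation hypothesis above is insensitive to such perturbation, choosing $r,R$ from good radii makes Proposition~\ref{prop:SeparationTh} applicable, produces the sets $E_k$ and hence the partition $(C_k)$, and completes the construction of $g_\varepsilon$.
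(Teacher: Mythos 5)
Your overall architecture coincides with the paper's: a finite net in $Y$, an application of Proposition~\ref{prop:SeparationTh} to annular preimages (with the separation hypothesis verified exactly as you do, via $\varepsilon$-fragmentedness), a disjointification into functionally countably resolvable pieces, and Proposition~\ref{prop:properties}(3) to conclude. The step you yourself flag as ``the main obstacle'' is, however, a genuine gap, and the way you propose to close it does not work. Your sets $A_k=\{\varphi_k\le r\}$ and $B_k=\{\varphi_k\ge R\}$ are functionally $G_\delta$ (since $\varphi_k$ is Baire-one), but they need not be functionally $F_\sigma$, and the claim that Lindel\"ofness leaves only countably many ``bad'' radii is false. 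Take $h:[0,1]\to[0,1]$ with $h(y)=0$ for irrational $y$ and $h(p/q)=1/q$; then $h$ is upper semicontinuous, so $f(x,y)=x-h(y)$ is Baire-one on the compact metric space $[0,1]^2$ and hence fragmented. For \emph{every} $c\in(0,1)$ one has $\{f\ge c\}\cap(\{c\}\times[0,1])=\{c\}\times([0,1]\setminus\mathbb Q)$, which is not $\sigma$-compact; since an $F_\sigma$ subset of a compact metric space is $\sigma$-compact and this property passes to intersections with closed sets, $\{f\ge c\}$ is not $F_\sigma$ (equivalently, not functionally $F_\sigma$) for an entire interval of values of $c$. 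Translating this through $\varphi_k(x)=d(f(x),y_k)$ shows that the level sets $\{\varphi_k\le r\}$ of a fragmented Baire-one map can fail to be functionally ambiguous for \emph{all} $r$ in an interval, so no perturbation of the radii rescues Proposition~\ref{prop:SeparationTh} as you invoke it.

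The fix is not to insist on exact preimages at a single radius. The paper instead widens the annulus and \emph{inserts} functionally ambiguous sets: since $f$ is Baire-one, $f^{-1}(B[y_k,\varepsilon])$ and $f^{-1}(Y\setminus B(y_k,2\varepsilon))$ are disjoint functionally $G_\delta$ sets, so the functional Sierpi\'nski separation lemma \cite[Lemma 4.2]{Karlova:CMUC:2013} (the same tool already used inside Proposition~\ref{prop:SeparationTh}) yields a functionally ambiguous $A_k$ with $f^{-1}(B[y_k,\varepsilon])\subseteq A_k\subseteq f^{-1}(B(y_k,2\varepsilon))$, and similarly a functionally ambiguous $B_k$ between $f^{-1}(Y\setminus B(y_k,4\varepsilon))$ and $f^{-1}(Y\setminus B[y_k,3\varepsilon])$. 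With these $A_k,B_k$ your verification of the hypothesis of Proposition~\ref{prop:SeparationTh} goes through verbatim (the gap between the radii $2\varepsilon$ and $3\varepsilon$ gives $d(f(A_k),f(B_k))\ge\varepsilon$), the covering and disjointification work as you describe, and the approximation error becomes $4\varepsilon$ instead of $\varepsilon$, which is harmless. A minor additional point: closure of functionally countably resolvable sets under complements, which your disjointification needs, is not literally item (2) of Proposition~\ref{prop:properties}, though it does follow directly from the definition by shifting the defining decreasing family by one index.
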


\begin{proof} Fix $m\in\omega$ and put $\varepsilon=\frac{1}{m}$. We take a finite $\varepsilon$-network $D=\{y_1,\dots,y_n\}$ in $Y$.
  Since $f\in{\rm B}_1(X,Z)$, for every $k\in\{1,\dots,n\}$ we use \cite[Lemma 4.2]{Karlova:CMUC:2013} and find functionally ambiguous sets $A_k$ and $B_k$ in $X$ such that
  \begin{gather}
    f^{-1}(B[y_k,\varepsilon])\subseteq A_k\subseteq f^{-1}(B(y_k,2\varepsilon)),\\
    f^{-1}(Y\setminus B(y_k,4\varepsilon))\subseteq B_k\subseteq f^{-1}(Y\setminus B[y_k,3\varepsilon]).\label{gath:i1}
  \end{gather}

  Fix $k\in \{1,\dots,n\}$ and a closed nonempty set $F\subseteq X$.   Since $f$ is $\varepsilon$-fragmented, there exists a relatively open nonempty set $U\subseteq F$ such that ${\rm diam}f(U)<\varepsilon$. The inequality
  \begin{gather*}
  d(f(A_k),f(B_k))\ge \varepsilon
  \end{gather*}
  implies that $U\cap A_k=\emptyset$ or $U\cap B_k=\emptyset$. Therefore, $\overline{F\cap A_k}\ne F\ne\overline{F\cap B_k}$. By Proposition~\ref{prop:SeparationTh} there exists a functionally countably resolvable set $C_k$ in $X$ such that $A_k\subseteq C_k\subseteq X\setminus B_k$.

  Notice that $(C_1,\dots,C_n)$ is a covering of $X$. For every $x\in X$ we put
  \begin{gather*}
    f_m(x)=\left\{\begin{array}{ll}
                  y_1, & x\in C_1, \\
                  y_k, & x\in C_k\setminus \bigcup_{j<k}C_j.
                \end{array}
    \right.
  \end{gather*}
  Proposition~\ref{prop:properties} implies that $f_m:X\to Y$ is functionally countably fragmented. Moreover, it follows from (\ref{gath:i1}) that
  \begin{gather*}
  d(f_m(x),f(x))<\frac{4}{m}
  \end{gather*}
for all $x\in X$.

  Hence, the sequence $(f_m)_{m=1}^\infty$ converges uniformly to $f$  on $X$.
\end{proof}

\section{Main results}
\begin{theorem}\label{thm:ext:main}
Any fragmented Baire-one function $f:X\to\mathbb R$ defined on a Lindel\"{o}ff space  is functionally countably fragmented.
\end{theorem}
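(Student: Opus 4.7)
My plan is to adapt the proof of Theorem~\ref{thm:approx_finite_valued} to the unbounded real-valued setting by replacing the finite $\varepsilon$-network with a countable one. Fix $\varepsilon>0$ and let $\{y_k:k\in\omega\}$ be a countable $\varepsilon/16$-net in $\mathbb{R}$ (e.g.\ $y_k = k\varepsilon/16$ enumerated by $\omega$).

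For each $k$, the Baire-one assumption on $f$ (via \cite[Lemma 4.2]{Karlova:CMUC:2013}) produces functionally ambiguous sets $A_k,B_k\subseteq X$ sandwiched between the preimages of appropriate concentric balls around $y_k$, exactly as in the proof of Theorem~\ref{thm:approx_finite_valued}. The fragmentedness of $f$ together with the $\varepsilon/16$-gap between $f(A_k)$ and $f(B_k)$ ensures the Baire--Cantor-type hypothesis of Proposition~\ref{prop:SeparationTh} for each pair, producing a functionally countably resolvable set $C_k$ with $A_k\subseteq C_k\subseteq X\setminus B_k$ and ${\rm diam}\, f(C_k)<\varepsilon/2$. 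Since the $A_k$ cover $X$, so do the $C_k$; setting $D_k=C_k\setminus\bigcup_{j<k}C_j$ and invoking Proposition~\ref{prop:properties}(2), I obtain a countable partition $\{D_k\}_{k\in\omega}$ of $X$ into functionally countably resolvable sets, on each of which $f$ has diameter less than $\varepsilon/2$.

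The remaining step---and the main technical obstacle---is to convert this countable partition into a countable regular sequence $(U_\xi)_{\xi\le\alpha}$ of functionally open sets with ${\rm diam}\, f(U_{\xi+1}\setminus U_\xi)<\varepsilon$. Each $D_k$ admits a canonical representation $D_k=\bigcup_{\xi<\alpha_k,\,\xi\text{ odd}}(F^{(k)}_\xi\setminus F^{(k)}_{\xi+1})$ as a countable union of differences of a decreasing chain of functionally closed sets, and passing to the complements $G^{(k)}_\xi = X\setminus F^{(k)}_\xi$ yields countably many countable chains of functionally open sets in $X$, indexed by pairs $(k,\xi)\in\omega\times\omega$. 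I will interleave these chains into a single transfinite chain of countable length via a well-ordering of the index pairs, taking countable unions at limit stages (which remain functionally open, because countable unions of cozero sets are cozero). By arranging the interleaving so that each successor difference is contained inside a single $D_k$, the resulting regular sequence has the required oscillation bound, showing $f$ is functionally $\varepsilon$-countably fragmented; since $\varepsilon>0$ was arbitrary, $f$ is functionally countably fragmented.
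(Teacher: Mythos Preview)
Your argument is sound through the construction of the countable partition $\{D_k\}_{k\in\omega}$ into functionally countably resolvable pieces with ${\rm diam}\,f(D_k)<\varepsilon/2$. The gap is the final ``interleaving'' step, and it is genuine. In the representation $D_k=\bigcup_{\xi<\alpha_k,\ \xi\text{ odd}}(F^{(k)}_\xi\setminus F^{(k)}_{\xi+1})$, the successor differences $G^{(k)}_{\xi+1}\setminus G^{(k)}_\xi=F^{(k)}_\xi\setminus F^{(k)}_{\xi+1}$ lie in $D_k$ only for \emph{odd} $\xi$; for even $\xi$ they lie in $X\setminus D_k=\bigcup_{j\ne k}D_j$, where the $f$-diameter is uncontrolled. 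Thus no single chain $(G^{(k)}_\xi)_\xi$ is $\varepsilon$-associated with $f$, and any interleaving obtained by well-ordering the index pairs inherits the same defect: whenever a step adds an even-indexed layer of chain $k$, the new slice may spread across many $D_j$'s. You assert that the interleaving can be ``arranged'' so that every successor difference sits inside a single $D_k$, but you give no mechanism, and the natural candidates fail. What you are implicitly invoking is a countable-valued analogue of Proposition~\ref{prop:properties}(3); the finite-valued statement works because a common refinement of \emph{finitely many} countable regular sequences still has countable length (a finite ordinal product of countable ordinals is countable), whereas the corresponding refinement of countably many such sequences can have length $\omega_1$.

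The paper avoids this obstacle by never leaving the finite-valued regime: it first reduces to bounded $f$ (via $\arctan$), then invokes Theorem~\ref{thm:approx_finite_valued}, which uses a \emph{finite} $\varepsilon$-net to produce finite-valued approximants $f_m$; for these Proposition~\ref{prop:properties}(3) applies directly, and Proposition~\ref{prop:properties}(\ref{prop:properties:it:4}) transfers functional countable fragmentedness to the uniform limit $f$. The finiteness of the net is not incidental---it is precisely what makes the combinatorics of Proposition~\ref{prop:properties}(3) close up, and your attempt to bypass the bounded reduction by taking a countable net runs straight into the missing countable-valued version of that proposition.
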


\begin{proof}
Assume that $f$ is bounded. We apply Theorem~\ref{thm:approx_finite_valued} and obtain that $f$ is a uniform limit of  a sequence of functionally countably fragmented functions. Then $f$ is functionally countably fragmented by Proposition~\ref{prop:properties}~(\ref{prop:properties:it:4}).

Now let $f$ be an arbitrary Baire-one fragmented function. For every $x\in X$ we put $g(x)={\rm arctg}(f(x))$. It is easy to see that the function \mbox{$g:X\to (-\frac{\pi}{2},\frac{\pi}{2})$} is fragmented and Baire-one. As we have already proved, $g$ is functionally countably fragmented. We apply Proposition~\ref{prop:ext_countable_frag_ness_R} and extend $g$ to a Baire-one function $\tilde g:\beta X \to (-\frac{\pi}{2},\frac{\pi}{2})$. Then we consider a function $\tilde f:\beta X\to\mathbb R$,
$\tilde f(x)={\rm tg}(\tilde g(x))$. Since $\tilde f$ is a Baire-one function defined on a compact space $\beta X$, $\tilde f$ is functionally countably fragmented. Clearly, $\tilde f|_X=f$. It follows that $f$ is functionally countably fragmented function on $X$.
\end{proof}

\begin{theorem}\label{thm:equiv:main} Let $X$ be a Lindel\"{o}ff space and $f:X\to\mathbb R$ be a Baire-one function. Then following conditions are equivalent:
\begin{enumerate}
  \item $f$ is extendable to a Baire-one function on any completely regular superspace $Y\supseteq X$;

  \item $f$ is extendable to a Baire-one function on any compactification $Y$ of $X$;

  \item $f$ is extendable to a Baire-one function on $\beta X$;

    \item $f$ is functionally countably fragmented;

    \item $f$ is fragmented.
\end{enumerate}
\end{theorem}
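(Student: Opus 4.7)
I plan to close the loop $(1)\Rightarrow(2)\Rightarrow(3)\Rightarrow(5)\Rightarrow(4)\Rightarrow(1)$, assembling results already in the paper.

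The implications $(1)\Rightarrow(2)\Rightarrow(3)$ are immediate: every compactification of $X$ is a compact Hausdorff, hence completely regular, superspace of $X$, and $\beta X$ is itself a compactification of $X$, so each condition in the chain is a special case of the previous one.

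For $(3)\Rightarrow(5)$ I would argue as follows. Suppose $\tilde f:\beta X\to\mathbb R$ is a Baire-one extension of $f$. Since $\beta X$ is compact Hausdorff, it is hereditarily Baire, so by the fact recalled in the introduction (every Baire-one map on a hereditarily Baire space is barely continuous, hence fragmented) $\tilde f$ is fragmented on $\beta X$. The restriction of a fragmented map to a subspace is obviously fragmented (given a closed $F\subseteq X$, take its closure $\overline F^{\beta X}$, fragment there, intersect with $F$), so $f=\tilde f|_X$ is fragmented.

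The implication $(5)\Rightarrow(4)$ is exactly Theorem~\ref{thm:ext:main}, which is already established. This is the substantive heart of the theorem, and the only step where any real work is done.

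Finally, for $(4)\Rightarrow(1)$ let $Y$ be an arbitrary completely regular superspace of $X$. By the Blair--Hager theorem recalled just above Proposition~\ref{prop:ext_countable_frag_ness_R}, the Lindel\"{o}f subspace $X$ is $z$-embedded in $Y$. Since $\mathbb R$ is a path-connected separable R-space (convex in itself, as noted in the definition of an R-space), Proposition~\ref{prop:ext_countable_frag_ness_R} applied to the functionally countably fragmented map $f:X\to\mathbb R$ produces a Baire-one extension $g:Y\to\mathbb R$. This closes the cycle. The only potential friction is checking that the hypotheses of the quoted propositions match exactly — in particular that $\mathbb R$ qualifies as an R-space in the sense of the paper and that the $z$-embedding hypothesis is supplied by Lindel\"{o}fness of $X$ — but both are explicitly stated in the preceding sections, so the combination is routine.
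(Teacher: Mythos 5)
Your proof is correct, and it follows the paper's skeleton except in one leg of the cycle. The paper proves $(3)\Rightarrow(4)$ directly, by quoting the external result that every Baire-one function on a compact space is functionally countably fragmented (Proposition~2 of \cite{Karlova:Mykhaylyuk:2017:ExtensionB1}) and restricting to $X$; you instead prove $(3)\Rightarrow(5)$ by the elementary observation that $\beta X$, being compact Hausdorff, is hereditarily Baire, so the extension is barely continuous and hence fragmented, and that fragmentability passes to subspaces (your density argument for $V\cap F\neq\emptyset$ is exactly what is needed), after which you fall back on Theorem~\ref{thm:ext:main} for $(5)\Rightarrow(4)$. Your variant is slightly more self-contained at this step, since it replaces the external citation by facts already stated in the introduction; the trade-off is that you route the implication through the heavier Theorem~\ref{thm:ext:main}, whereas the paper's direct $(3)\Rightarrow(4)$ keeps that theorem confined to the $(5)\Rightarrow(4)$ leg. (The external Proposition~2 is in any case used inside the proofs of Proposition~\ref{prop:SeparationTh} and Theorem~\ref{thm:ext:main}, so no dependency is actually eliminated.) The remaining steps --- the trivial chain $(1)\Rightarrow(2)\Rightarrow(3)$ and the application of Blair--Hager plus Proposition~\ref{prop:ext_countable_frag_ness_R} with target $\mathbb R$ for $(4)\Rightarrow(1)$ --- coincide with the paper's argument.
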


\begin{proof}
Implications  1)~$\Rightarrow$~2)~$\Rightarrow$~3) and 4)~$\Rightarrow$~5) are obvious.

We prove 3)~$\Rightarrow$~4). Let $g\in{\rm B}_1(\beta X,\mathbb R)$ be an extension of $f$. Then $g$ is functionally countably fragmented by \cite[Proposition 2]{Karlova:Mykhaylyuk:2017:ExtensionB1}. Hence, $f=g|_X$ is functionally countably fragmented too.

Implication 5)~$\Rightarrow$~4) follows from Theorem~\ref{thm:ext:main}. Finally, Proposition~\ref{prop:ext_countable_frag_ness_R} implies that 4)~$\Rightarrow$~1).
\end{proof}

\section{Example}

In this section we construct a completely metrizable locally compact space $X$ and a Baire one function $f:X\to[0,1]$ which can not be extended to a Baire one function $g:\beta X\to [0,1]$. This gives a negative answer to \cite[Question 1]{Kal_Spu} (see also \cite[Theorem 7]{Karlova:Mykhaylyuk:2017:ExtensionB1}).

We start with two auxiliary assertions.

\begin{lemma}\label{l:3.1}
 Let $X$ be a metrizable separable space without isolated points and $A\subseteq X$ be a closed nowhere dense set. Then there exists a closed nowhere dense set $B$ in $X$ without isolated points, $A\subseteq B$ and $A$ is nowhere dense in $B$.
\end{lemma}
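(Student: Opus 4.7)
The plan is to write $B$ in the form $A \cup C$, where $C \subseteq X \setminus A$ is a countable dense-in-itself set built from a Cantor scheme and satisfying $\overline{C}^X = A \cup C$. The case $A = \emptyset$ is trivial (take $B = \emptyset$), so assume $A \neq \emptyset$; fix a compatible metric $d$ on $X$ and a countable set $\{a_n : n \in \omega\}$ dense in $A$.

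For each $n$ I construct $C_n \subseteq (X \setminus A) \cap B(a_n, 2^{-n})$ that is dense-in-itself, has empty interior in $X$, and satisfies $\overline{C_n}^X = C_n \cup \{a_n\}$. The construction is recursive over $\omega^{<\omega}$: set $y_\emptyset = a_n$ and $U_\emptyset = B(a_n, 2^{-n})$; given $(y_s, U_s)$, choose distinct points $y_{s*k} \in U_s \cap (X \setminus A)$ with $y_{s*k} \to y_s$ and put each $y_{s*k}$ inside an open ball $U_{s*k}$ so that (i) the closures $\overline{U_{s*k}}$ are pairwise disjoint, contained in $U_s \setminus \{y_s\}$, and disjoint from $A$; (ii) $\mathrm{diam}(U_{s*k}) \to 0$ and $d(U_{s*k}, y_s) \to 0$ as $k \to \infty$; and (iii) $U_s \setminus (\{y_s\} \cup \bigcup_k \overline{U_{s*k}})$ remains nonempty. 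The existence of all these choices uses that $X$ has no isolated points and $X \setminus A$ is open and dense. Put $C_n := \{y_s : |s| \geq 1\}$. Density-in-itself follows from (ii); empty interior from (iii); and the closure equality from the usual ``longest common prefix'' argument: for distinct $y_{s_i} \to p$, a maximal $\tau \in \omega^{<\omega}$ prefixing infinitely many $s_i$ forces the digit of $s_i$ after $\tau$ to be unbounded along a subsequence, and condition (ii) then yields $y_{s_i} \to y_\tau$, giving $p = a_n$ if $\tau = \emptyset$ and $p \in C_n$ otherwise.

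Finally, set $C := \bigcup_n C_n$ and $B := A \cup C$. The containment $C_n \subseteq B(a_n, 2^{-n})$ makes $\{C_n\}$ locally finite in $X \setminus A$ (at any $x \in X \setminus A$ with $d(x, A) = \delta$, only those $n$ with $2^{-n} > \delta/2$ can satisfy $C_n \cap B(x, \delta/2) \neq \emptyset$); together with the properties of each $C_n$ this gives: $C$ is closed in $X \setminus A$ and nowhere dense there, $\overline{C}^X = A \cup C$, and $A \subseteq \overline{C}$ by density of $\{a_n\}$ in $A$. All five demands on $B$ then follow at once --- $B$ is closed, $B$ is nowhere dense in $X$ (union of the nowhere-dense set $A$ and a set nowhere dense in $X \setminus A$), $B$ has no isolated points (each $a \in A$ is a limit of $C$, each $y_s \in C_n$ is a limit of its children), and $B \setminus A = C$ is dense in $B$. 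The main technical difficulty is the gap requirement (iii), which is automatic when $X$ has uncountable nonempty open subsets but needs inductively shrunk radii in the general case, where $X$ may locally look like $\mathbb Q$.
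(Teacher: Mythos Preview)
Your construction cannot work as stated: the closure formula $\overline{C_n}^{X}=C_n\cup\{a_n\}$ is impossible in any Baire separable metric space (in particular in $X=[0,1]$, the very case the paper needs). Indeed, your set $C_n=\{y_s:|s|\ge 1\}$ is countable and, by your own condition (ii), dense-in-itself; hence $\overline{C_n}$ is a nonempty closed dense-in-itself (that is, perfect) subset of $X$. In a Polish space a nonempty perfect set has cardinality $\mathfrak{c}$, so $\overline{C_n}$ cannot equal the countable set $C_n\cup\{a_n\}$. Concretely, the ``maximal $\tau$'' in your longest-common-prefix argument need not exist: for any infinite branch $\sigma\in\omega^{\omega}$ the points $y_{\sigma\restriction j}$ lie in the nested sets $\overline{U_{\sigma\restriction j}}$, and whenever this sequence converges (as it must in a complete space once diameters shrink, or can be arranged to even if they do not) the limit belongs to $\overline{C_n}$ but to no $y_t$ and not to $A$. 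Adding the missing Cantor-scheme condition $\mathrm{diam}(U_s)\to 0$ along branches only makes things worse, since then \emph{every} branch contributes a new limit point.

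The approach is salvageable if you drop the insistence that $C$ be countable and instead take $C_n$ to be the full closure of your scheme (so $C_n$ becomes a Cantor-type set attached at $a_n$); then perfectness, nowhere-density, and $A\subseteq\overline{C}$ survive, but the local-finiteness bookkeeping and the proof that $\overline{C}=A\cup C$ need to be redone. The paper avoids all of this by going in the opposite direction: rather than building $B$ up from $A$, it carves $B$ out of $X$ by removing from each basic open set $U_n$ a small open $V_n$ that misses $A$ and a growing list of reserved points $x_k$, and sets $B=X\setminus\bigcup_n V_n$. This gives closedness and nowhere-density for free and handles the ``no isolated points'' and ``$A$ nowhere dense in $B$'' requirements with a single inductive choice per basic set, which is considerably lighter than a full Cantor scheme.
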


\begin{proof} Fix a base $(U_n:n\in\mathbb N)$ of the topology of $X$ and a point $x_0\in X$. Using the induction on $n$ it is easy to construct a sequence $(V_n)_{n=1}^{\infty}$ of nonempty open sets $V_n$ in $X$ and a sequence $(x_n)_{n=1}^{\infty}$ of points $x_n\in X$ with the properties:
\begin{enumerate}
\item[$(1)$] $V_n\subseteq U_n$ and $W_n=U_n\setminus\overline{V_n}\ne\emptyset$;

\item[$(2)$] $V_n\cap(A\cup\{x_k:0\leq k\leq n-1\})=\emptyset$;

\item[$(3)$] $x_n=x_0$, if $U_n\cap(A\cup\{x_k:0\leq k\leq n-1\})=\emptyset$;

\item[$(4)$] $x_n\in W_n\setminus(A\cup\{x_k:0\leq k\leq n-1\})$, if $U_n\cap(A\cup\{x_k:0\leq k\leq n-1\})\ne\emptyset$.
\end{enumerate}

It remains to put $B=X\setminus (\bigcup\limits_{n=1}^{\infty}V_n)$.
\end{proof}

\begin{lemma}\label{l:3.2}
 There exists a family $(F_t:t\in[0,1])$ of closed sets $F_t\subseteq [0,1]$ such that
\begin{enumerate}
 \item[$(1)$] $F_s$ is nowhere dense in $F_t$ for every $0\leq s<t\leq 1$;

\item[$(2)$] $F_t=\bigcap\limits_{s\in(t,1]}F_s$ for every $t\in[0,1)$.
 \end{enumerate}
\end{lemma}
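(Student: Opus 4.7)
My plan is to construct $(F_t)_{t\in[0,1]}$ in two stages: first build $F_d$ for all dyadic rationals $d\in[0,1]$ by iterating Lemma~\ref{l:3.1}, then extend to arbitrary $t$ by taking intersections.

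\emph{Stage 1 (dyadic skeleton).} Enumerate the dyadic rationals in $[0,1]$ in breadth-first order $d_0=0$, $d_1=1$, $d_2=1/2$, $d_3=1/4$, $d_4=3/4,\ldots$, so that $\{k/2^n:0\le k\le 2^n\}$ is exhausted by step $2^n$. Put $F_0=\emptyset$ and let $F_1\subseteq[0,1]$ be any Cantor set. For $n\ge 2$ the parameter $d_n$ lies strictly between its immediate order-predecessor $d_i$ and order-successor $d_j$ inside $\{d_0,\ldots,d_{n-1}\}$. By the inductive hypothesis $F_{d_j}$ is a metrizable separable space without isolated points and $F_{d_i}$ is a closed nowhere dense subset of it, so Lemma~\ref{l:3.1} applied inside $F_{d_j}$ delivers a closed set $F_{d_n}$ with $F_{d_i}\subseteq F_{d_n}\subseteq F_{d_j}$, where $F_{d_n}$ is nowhere dense in $F_{d_j}$ and has no isolated points, and $F_{d_i}$ is nowhere dense in $F_{d_n}$. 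Since any subset of a nowhere dense set is still nowhere dense, a chaining argument yields $F_d$ nowhere dense in $F_{d'}$ whenever $d<d'$ are both dyadic.

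\emph{Stage 2 (extension).} For $t\in[0,1)$ I define
\[
F_t=\bigcap\{F_d:d\text{ dyadic},\ d\in(t,1]\},
\]
and leave $F_1$ as in Stage~1. For dyadic $d<1$ this may strictly enlarge the set produced in Stage~1, but harmlessly. Each $F_t$ is closed, and $s<t$ yields $F_s\subseteq F_t$ at once, since the intersection for $F_s$ ranges over a larger family of dyadics. Condition~(2) is then a swap of intersections: every dyadic $d\in(t,1]$ lies in $(s,1]$ for some $s\in(t,d)$, giving $\bigcap_{s\in(t,1]}F_s\subseteq F_d$ for every such $d$, hence $\bigcap_{s\in(t,1]}F_s\subseteq F_t$; the reverse inclusion is monotonicity.

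For condition~(1), given $0\le s<t\le 1$ pick dyadics $d_1,d_2$ with $s<d_1<d_2<t$. By construction $F_s\subseteq F_{d_1}$, and $F_{d_2}\subseteq F_t$ because $F_{d_2}\subseteq F_d$ for every dyadic $d>t>d_2$. Stage~1 gives $F_{d_1}$ nowhere dense in $F_{d_2}$; I would promote this to $F_{d_1}$ nowhere dense in $F_t$, after which $F_s\subseteq F_{d_1}$ inherits the property. Let $V=U\cap F_t$ be a nonempty relatively open subset of $F_t$ with $U\subseteq[0,1]$ open. If $U\cap F_{d_2}\ne\emptyset$, Stage~1 supplies $y\in U\cap F_{d_2}\setminus F_{d_1}$, and $F_{d_2}\subseteq F_t$ places $y$ in $V\setminus F_{d_1}$. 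If instead $U\cap F_{d_2}=\emptyset$, then $V\cap F_{d_1}\subseteq U\cap F_{d_2}=\emptyset$, so $V\not\subseteq F_{d_1}$ because $V\ne\emptyset$.

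The main obstacle I anticipate is precisely this passage from the dyadic skeleton to general parameters: the intersection defining $F_t$ can strictly contain each participating $F_d$, so a nowhere-density relation from Stage~1 does not transfer by pure monotonicity. Sandwiching by an auxiliary dyadic $d_2$ with $F_{d_2}\subseteq F_t$ and running the two-case split above is the mechanism that converts the dyadic-level statement into the statement for arbitrary $s<t$.
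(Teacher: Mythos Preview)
Your proof is correct and follows the same strategy as the paper: build the family over a countable dense index set by iterating Lemma~\ref{l:3.1}, then extend to all $t$ by intersecting over larger indices. The paper uses an enumeration of $\mathbb{Q}\cap[0,1]$ with $A_1=[0,1]$ and leaves the verification of (1) and (2) to the reader, whereas you work with dyadics, start from a Cantor set, and spell out the sandwiching argument for (1); these are cosmetic differences.
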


\begin{proof} Let $\mathbb Q\cap[0,1]=\{r_0, r_1, r_2,\dots\}$,  $r_0=0$, $r_1=1$ and all $r_n$ are distinct. We put $A_0=\emptyset$, $A_1=[0,1]$. Now using the induction on $n$ and Lemma \ref{l:3.1} it is easy to construct a sequence $(A_n)_{n=0}^{\infty}$ of closed sets $A_n\subseteq[0,1]$ such that for every $n\in\mathbb N$ the set $A_n$ has no  isolated points and for all $n,k\geq 0$ with $r_n<r_k$ the set $A_n$ is nowhere dense in the set $A_k$. It remains to put
$F_t=\bigcap_{r_n< t} A_n$.
\end{proof}

 Let  $X$ be a topological space, $\varepsilon >0$ and let $f:X\to \mathbb R$ be an $\varepsilon$-fragmented function. The smallest ordinal $\alpha$ such that there exists a strictly increasing sequence
$$
\emptyset=U_0\subset U_1\subset\dots \subset U_\alpha=X
$$
of open sets $U_\xi$ such that
 \begin{itemize}

\item[(1)]  $U_\beta =\bigcup\limits_{\xi<\beta}U_\xi$ for every limit ordinal $\beta\leq\alpha$;

\item[(2)]  ${\rm diam}(f(U_\xi\setminus\bigcup\limits_{\eta<\xi}U_\eta))<\varepsilon$ for every $\xi\leq\alpha$;
  \end{itemize}
is called {\it an index of $\varepsilon$-fragmentability of $f$.

\begin{proposition}\label{pr:3.3}
 For every ordinal $\alpha\in[\omega,\omega_1)$ there exists a function $f:[0,1]\to[0,1]$ such that the index of the $1$-fragmentability of $f$ is equals to $\alpha+1$.
\end{proposition}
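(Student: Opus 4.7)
The plan is to use the closed chain from Lemma~\ref{l:3.2} as a ``yardstick'' and to build $f$ by a rank-based rule whose fragmentation depth is precisely $\alpha+1$. Since $\alpha+1$ is a countable ordinal, first choose a strictly increasing, limit-continuous map $\xi\mapsto t_\xi$ from $[0,\alpha]$ into $[0,1]$ with $t_0=0$, $t_\alpha=1$ (so $t_\beta=\sup_{\eta<\beta}t_\eta$ at every limit $\beta\le\alpha$), and set $E_\xi=F_{t_\xi}$. Then $(E_\xi)_{\xi\le\alpha}$ is an increasing chain of closed subsets of $[0,1]$ with $E_\alpha=F_1=[0,1]$, with $E_\xi$ nowhere dense in $E_\eta$ whenever $\xi<\eta$, and with $E_\beta=\bigcap_{\xi>\beta}E_\xi$ at each limit $\beta\le\alpha$ (via condition~(2) of Lemma~\ref{l:3.2} combined with the continuity of $\xi\mapsto t_\xi$). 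Define the rank $\rho(x)=\min\{\xi\le\alpha:x\in E_\xi\}$ (well defined since $E_\alpha=[0,1]$) and take $f=\chi_A$, where $A\subseteq[0,1]$ is a specific set built from the rank-levels $\{\rho=\xi\}$ so that $f$ alternates between $0$ and $1$ on consecutive levels in a pattern dictated by the Cantor normal form of the ordinals. Each level set of $f$ is a Boolean combination of the closed sets $E_\xi$, so $f$ is Baire-one by a standard pointwise-limit argument.

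For the upper bound on the $1$-fragmentability index, build an open stratification $(U_\xi)_{\xi\le\alpha+1}$ of $[0,1]$ by transfinite recursion: put $U_0=\emptyset$; at a successor step $\xi+1$ adjoin to $U_\xi$ the largest open subset of $[0,1]\setminus E_\eta$ (for the appropriate $\eta$) on which $f$ is constant; at limit steps take unions. The continuity $E_\beta=\bigcap_{\xi>\beta}E_\xi$ at limits and the fact that the $E_\xi$ are closed guarantee that the recursion produces open sets at every stage and terminates at $U_{\alpha+1}=[0,1]$. Because each successor layer $U_{\xi+1}\setminus U_\xi$ is rank-constant (hence monochromatic for $f$), ${\rm diam}\,f(U_{\xi+1}\setminus U_\xi)=0<1$.

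The lower bound, that the index is at least $\alpha+1$, is the principal obstacle. Suppose, for contradiction, that $(V_\zeta)_{\zeta\le\beta}$ witnesses $1$-fragmentability with $\beta\le\alpha$, and pass to the complementary decreasing chain $C_\zeta=[0,1]\setminus V_\zeta$ of closed sets, with $C_0=[0,1]$, $C_\beta=\emptyset$, continuous at limits, on each layer $C_\zeta\setminus C_{\zeta+1}$ of which $f$ is constant (since $f\in\{0,1\}$ and ${\rm diam}\,f<1$). Let $\tau(\zeta)=\sup\{\eta\le\alpha:C_\zeta\cap(E_\eta\setminus E_{<\eta})\neq\emptyset\}$ and show by transfinite induction on $\zeta$ that $\tau(\zeta)$ is at least the ordinal difference $\alpha-\zeta$ (the unique $\gamma$ with $\zeta+\gamma=\alpha$). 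At a successor step, the nowhere density of $E_{\tau(\zeta)-1}$ in $E_{\tau(\zeta)}$ forces every relatively open subset of $C_\zeta\cap E_{\tau(\zeta)}$ to contain both points of rank $\tau(\zeta)$ and, by density, points of strictly lower rank; by the alternating construction of $A$, these carry opposite $f$-values, so the layer $C_\zeta\setminus C_{\zeta+1}$ cannot exhaust all the rank-$\tau(\zeta)$ content, leaving $C_{\zeta+1}$ with at least one point of rank $\ge\tau(\zeta)-1$. At a limit stage $\beta'$ one combines the identity $E_\gamma=\bigcap_{\eta>\gamma}E_\eta$ with $C_{\beta'}=\bigcap_{\zeta<\beta'}C_\zeta$ and a finite-intersection/compactness argument inside $[0,1]$ to extract a point of the required rank in $C_{\beta'}$. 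The conclusion $\tau(\beta)\ge 0$ then contradicts $C_\beta=\emptyset$. The most delicate point is the limit-stage bookkeeping: both the compactness argument producing a point of sufficiently high rank in $C_{\beta'}$ and the verification that $\alpha-\zeta$ behaves as required under the induction rely essentially on the precise right-continuity $E_\gamma=\bigcap_{\eta>\gamma}E_\eta$ built into Lemma~\ref{l:3.2}.
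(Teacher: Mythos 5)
Your overall strategy is the paper's: take the transfinite nested family of closed sets supplied by Lemma~\ref{l:3.2}, let $f$ be the characteristic function of an alternating union of consecutive differences, read the upper bound off the complements, and get the lower bound by transfinite induction showing that any witnessing regular sequence must be at least as long as the canonical one. But there is a genuine structural error: you orient the chain the wrong way. You index so that $(E_\xi)_{\xi\le\alpha}$ is \emph{increasing} with $E_\alpha=[0,1]$. Then (i) your limit-continuity claim $E_\beta=\bigcap_{\xi>\beta}E_\xi$ is false as stated --- for an increasing chain that intersection is just $E_{\beta+1}$ --- and the continuity that Lemma~\ref{l:3.2}(2) actually provides (continuity from above in the real parameter) is only usable when the ordinal index runs against the real parameter; and (ii) the complements $[0,1]\setminus E_\xi$ are then \emph{decreasing} in $\xi$, so to turn them into a regular sequence $\emptyset=U_0\subset\dots\subset U_{\alpha+1}=[0,1]$ you would have to enumerate them in reverse, and the reverse of $[0,\alpha]$ is not a well-order for $\alpha\ge\omega$ (there is no ``$\alpha-1$''). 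This is exactly why your successor step must appeal to an unspecified ``appropriate $\eta$'' and why the assertion that the recursion terminates at step $\alpha+1$ is unsupported. The paper avoids all of this by extracting from Lemma~\ref{l:3.2} a sequence $(F_\xi)_{\xi\le\alpha}$ that is \emph{decreasing} in $\xi$, with $F_0=[0,1]$, $F_\xi$ nowhere dense in $F_\eta$ for $\eta<\xi$, and $F_\gamma=\bigcap_{\eta<\gamma}F_\eta$ at limits; then $G_\xi=[0,1]\setminus F_\xi$ is literally the required regular sequence, and $A=\bigcup_{\xi\ \mathrm{even}}(F_\xi\setminus F_{\xi+1})$ is a fully explicit alternating set (only the parity of $\xi$ matters; no Cantor normal form, and your set $A$ is never actually defined).

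The lower bound has further gaps beyond the orientation issue. Your quantity $\tau(\zeta)$ is a supremum that need not be attained; the successor step uses ``$\tau(\zeta)-1$'', which does not exist when $\tau(\zeta)$ is a limit ordinal; and the limit-stage ``compactness argument'' is only named, not given. The clean statement, which your induction is groping toward, is the paper's: for any regular sequence $(U_\xi)_{\xi\le\beta}$ with ${\rm diam}\,f(U_{\xi+1}\setminus U_\xi)<1$ one proves $U_\xi\subseteq G_\xi$ (equivalently $U_\xi\cap F_\xi=\emptyset$) by induction on $\xi$: at a successor, a point of $U_{\xi+1}\cap F_{\xi+1}$ would force the monochromatic layer $U_{\xi+1}\setminus U_\xi$ to meet both $F_\xi\setminus F_{\xi+1}$ and $F_{\xi+1}\setminus F_{\xi+2}$, where $f$ takes different values, by nowhere density; at a limit one uses $F_\gamma=\bigcap_{\eta<\gamma}F_\eta$ together with $U_\gamma=\bigcup_{\xi<\gamma}U_\xi$. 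Since $F_\alpha\ne\emptyset$, this yields $U_\alpha\ne[0,1]$ and hence $\beta\ge\alpha+1$, with no ordinal subtraction and no compactness needed. I would also flag that ``each level set of $f$ is a Boolean combination of the $E_\xi$'' is inaccurate for infinite $\alpha$: $A$ is a countable union of differences of closed sets, which is what one actually uses.
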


\begin{proof} According to Lemma \ref{l:3.2} there exists a strictly decreasing sequence \mbox{$(F_\xi:0\leq \xi \leq \alpha)$} of closed sets $F_\xi\subseteq[0,1]$ such that

$(1)$\,\,$F_0=[0,1]$;

$(2)$\,\,$F_\xi$ is nowhere dense in $F_\eta$ for every $0\leq\eta <\xi\leq \alpha$;

$(3)$\,\,$F_\xi=\bigcap\limits_{0\leq \eta<\xi}F_\eta$ for every $\xi\leq\alpha$.

Let $\Lambda$ be the set of all even ordinals $\xi\leq \alpha$. Now we put $F_{\alpha+1}=\emptyset$, $A=\bigcup\limits_{\xi \in \Lambda}(F_\xi\setminus F_{\xi+1})$ and consider the function $f:[0,1]\to[0,1]$, $f=\chi|_A$.

For every $\xi\leq \alpha+1$ we put $G_\xi=[0,1]\setminus F_\xi$. Note that ${\rm diam}f(G_{\xi+1}\setminus G_\xi)=0$ for every $\xi\leq\alpha$ and $G_{\alpha+1}=[0,1]$.
Therefore, the index of the fragmentability of $f$ is not greater than $\alpha+1$.

Now let $(U_\xi:0\leq\xi\leq\beta)$ be a regular covering of $[0,1]$ such that ${\rm diam}f(U_{\xi+1}\setminus U_\xi)<1$ for every $\xi<\beta$. It easy to see that $U_\xi\subseteq G_\xi$ for every $\xi\leq \alpha+1$. Therefore, $\beta\geq \alpha+1$.
\end{proof}

\begin{theorem}\label{th:3.4}
There exist a completely metrizable locally compact space $X$ and a Baire one function $f:X\to[0,1]$ such that $f$ is not countably fragmented, in particular, $f$ can not be extended to a Baire one function $g:\beta X\to[0,1]$.
\end{theorem}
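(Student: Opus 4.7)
The plan is to build $X$ as the topological disjoint sum $X=\bigsqcup_{\alpha\in[\omega,\omega_1)}X_\alpha$, where each $X_\alpha$ is a homeomorphic copy of $[0,1]$, and on each $X_\alpha$ to install the function $f_\alpha$ supplied by Proposition~\ref{pr:3.3}, whose $1$-fragmentability index equals $\alpha+1$. The global function $f(x):=f_\alpha(x)$ for $x\in X_\alpha$ will be Baire-one by clopen pasting, but the unboundedness of the indices $\alpha+1$ as $\alpha$ ranges over $[\omega,\omega_1)$ will force $f$ to fail countable $1$-fragmentability, and hence prevent any extension to a Baire-one function on the compact space $\beta X$.

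First I verify that $X$ is locally compact (since each $X_\alpha$ is) and completely metrizable (combine the individual metrics and declare points in distinct pieces to lie at distance $1$). To see $f\in{\rm B}_1(X,[0,1])$, choose continuous $g_{\alpha,n}:X_\alpha\to[0,1]$ with $g_{\alpha,n}\to f_\alpha$ pointwise, and define $g_n:X\to[0,1]$ by $g_n|_{X_\alpha}=g_{\alpha,n}$; since each $X_\alpha$ is clopen, $g_n$ is continuous on $X$ and $g_n\to f$ pointwise.

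The crux is to show $f$ is not $1$-countably fragmented. Suppose, toward contradiction, that some regular sequence $(U_\xi:\xi\le\beta)$ in $X$ with $\beta<\omega_1$ satisfies ${\rm diam}\,f(U_{\xi+1}\setminus U_\xi)<1$ for every $\xi<\beta$. Fix any $\alpha\in[\omega,\omega_1)$ with $\alpha\ge\beta$ and consider the trace $V_\xi:=U_\xi\cap X_\alpha$: it is a nondecreasing family of open subsets of $X_\alpha$ with $V_0=\emptyset$, $V_\beta=X_\alpha$, preserving unions at limit ordinals, and satisfying ${\rm diam}\,f_\alpha(V_{\xi+1}\setminus V_\xi)<1$. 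A standard transfinite compression (retaining only indices at which the trace strictly grows, and closing under limits) yields a strictly increasing regular sequence of open sets witnessing $1$-fragmentability of $f_\alpha$ with last index at most $\beta$; this bounds the index of $1$-fragmentability of $f_\alpha$ by $\beta$, contradicting the equality ${\rm ind}_1(f_\alpha)=\alpha+1>\beta$.

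For the second assertion, any Baire-one extension $g:\beta X\to[0,1]$ would be functionally countably fragmented by \cite[Proposition~2]{Karlova:Mykhaylyuk:2017:ExtensionB1} (since $\beta X$ is compact), whence the restriction $f=g|_X$ would be $1$-countably fragmented via the same trace-and-compress procedure applied to its regular sequence on $\beta X$, contradicting the preceding paragraph. The most delicate step is the compression: one must verify both that the resulting sequence is regular (closed under unions at limits) and that its length does not exceed $\beta+1$; the former follows because at every limit ordinal the trace already agrees with the union of its predecessors, while the latter follows because the indices at which the trace strictly grows inject order-preservingly into $[0,\beta]$.
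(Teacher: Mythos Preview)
Your construction is exactly the paper's: take $X=\bigoplus_{\alpha}X_\alpha$ with $X_\alpha\cong[0,1]$, put on each piece the function from Proposition~\ref{pr:3.3} with $1$-fragmentability index $\alpha+1$, and observe that the resulting $f$ is Baire-one but not countably fragmented. You supply details the paper omits (the explicit clopen-pasting argument for Baire class one, the trace-and-compress argument bounding the index of $f_\alpha$ by any putative countable length $\beta$, and the appeal to \cite[Proposition~2]{Karlova:Mykhaylyuk:2017:ExtensionB1} for the non-extendability to $\beta X$), so your write-up is in fact more complete than the paper's; the approach is the same.
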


\begin{proof} For every $\alpha<\omega_1$ we put $X_\alpha=[0,1]$ and consider the completely metrizable locally compact space $X=\bigoplus\limits_{\alpha<\omega_1}X_\alpha$. Using Proposition \ref{pr:3.3} for every $\alpha<\omega_1$ we choose a countably fragmented function $f_\alpha:X_\alpha\to[0,1]$ such that the index of fragmentability of $f$ is greater than $\alpha$. Now we consider the function $f:X\to[0,1]$, $f(x)=f_\alpha(x)$ if $x\in X_\alpha$. Since every $f_\alpha$ is a Baire one function, $f$ is a Baire one function too. Moreover, it is clear that $f$ is not countably fragmented.
\end{proof}

{\small
}

\end{document}